\makeatletter \@namedef{subjclassname@2000}{\emph{2020 Mathematics
Subject Classification}} \makeatother
\newtheorem{theorem}{Theorem}[section]
 \newtheorem{definition}[theorem]{Definition}
 \newtheorem{lemma}[theorem]{Lemma}
 \newtheorem{proposition}[theorem]{Proposition}
 \newtheorem{corollary}[theorem]{Corollary}
 \newtheorem{example}{Example}
\numberwithin{equation}{section}
\renewcommand{\theequation}{\thesection.\arabic{equation}}
\let\pdfoutput=\undefined\fi
\chardef\@x10\chardef\@xv60
\def\tcitime{
\def\@time{%
  \@minute\time\@hour\@minute\divide\@hour\@xv
  \ifnum\@hour<\@x 0\fi\the\@hour:%
  \multiply\@hour\@xv\advance\@minute-\@hour
  \ifnum\@minute<\@x 0\fi\the\@minute
  }}%
\def\x@hyperref#1#2#3{%
   \catcode`\~ = 12
   \catcode`\$ = 12
   \catcode`\_ = 12
   \catcode`\# = 12
   \catcode`\& = 12
   \y@hyperref{#1}{#2}{#3}%
}
\def\y@hyperref#1#2#3#4{%
   #2\ref{#4}#3
   \catcode`\~ = 13
   \catcode`\$ = 3
   \catcode`\_ = 8
   \catcode`\# = 6
   \catcode`\& = 4
}
\def\QCTOpt[#1]#2{%
  \def\QCTOptB{#1}
  \def\QCTOptA{#2}
}
\def\QCTNOpt#1{%
  \def\QCTOptA{#1}
  \let\QCTOptB\empty
}
\def\Qct{%
  \@ifnextchar[{%
    \QCTOpt}{\QCTNOpt}
}
\def\QCBOpt[#1]#2{%
  \def\QCBOptB{#1}%
  \def\QCBOptA{#2}%
}
\def\QCBNOpt#1{%
  \def\QCBOptA{#1}%
  \let\QCBOptB\empty
}
\def\Qcb{%
  \@ifnextchar[{%
    \QCBOpt}{\QCBNOpt}%
}
\def\PrepCapArgs{%
  \ifx\QCBOptA\empty
    \ifx\QCTOptA\empty
      {}%
    \else
      \ifx\QCTOptB\empty
        {\QCTOptA}%
      \else
        [\QCTOptB]{\QCTOptA}%
      \fi
    \fi
  \else
    \ifx\QCBOptA\empty
      {}%
    \else
      \ifx\QCBOptB\empty
        {\QCBOptA}%
      \else
        [\QCBOptB]{\QCBOptA}%
      \fi
    \fi
  \fi
}
\def\GRAPHICSPS#1{%
 \ifcase\GRAPHICSTYPE
   \special{ps: #1}%
 \or
   \special{language "PS", include "#1"}%
 \fi
}%
\def\graffile#1#2#3#4{%
    \bgroup
	   \@inlabelfalse
       \leavevmode
       \@ifundefined{bbl@deactivate}{\def~{\string~}}{\activesoff}%
        \raise -#4 \BOXTHEFRAME{%
           \hbox to #2{\raise #3\hbox to #2{\null #1\hfil}}}%
    \egroup
}%
\def\draftbox#1#2#3#4{%
 \leavevmode\raise -#4 \hbox{%
  \frame{\rlap{\protect\tiny #1}\hbox to #2%
   {\vrule height#3 width\z@ depth\z@\hfil}%
  }%
 }%
}%
\let\nographics=\@msidraft
\newif\ifwasdraft
\def\GRAPHIC#1#2#3#4#5{%
   \ifnum\@msidraft=\@ne\draftbox{#2}{#3}{#4}{#5}%
   \else\graffile{#1}{#3}{#4}{#5}%
   \fi
}
\def\addtoLaTeXparams#1{%
    \edef\LaTeXparams{\LaTeXparams #1}}%
\newif\ifBoxFrame \BoxFramefalse
\newif\ifOverFrame \OverFramefalse
\newif\ifUnderFrame \UnderFramefalse
\def\BOXTHEFRAME#1{%
   \hbox{%
      \ifBoxFrame
         \frame{#1}%
      \else
         {#1}%
      \fi
   }%
}
\def\doFRAMEparams#1{\BoxFramefalse\OverFramefalse\UnderFramefalse\readFRAMEparams#1\end}%
\def\readFRAMEparams#1{%
 \ifx#1\end%
  \let\next=\relax
  \else
  \ifx#1i\dispkind=\z@\fi
  \ifx#1d\dispkind=\@ne\fi
  \ifx#1f\dispkind=\tw@\fi
  \ifx#1t\addtoLaTeXparams{t}\fi
  \ifx#1b\addtoLaTeXparams{b}\fi
  \ifx#1p\addtoLaTeXparams{p}\fi
  \ifx#1h\addtoLaTeXparams{h}\fi
  \ifx#1X\BoxFrametrue\fi
  \ifx#1O\OverFrametrue\fi
  \ifx#1U\UnderFrametrue\fi
  \ifx#1w
    \ifnum\@msidraft=1\wasdrafttrue\else\wasdraftfalse\fi
    \@msidraft=\@ne
  \fi
  \let\next=\readFRAMEparams
  \fi
 \next
 }%
\def\IFRAME#1#2#3#4#5#6{%
      \bgroup
      \let\QCTOptA\empty
      \let\QCTOptB\empty
      \let\QCBOptA\empty
      \let\QCBOptB\empty
      #6%
      \parindent=0pt
      \leftskip=0pt
      \rightskip=0pt
      \setbox0=\hbox{\QCBOptA}%
      \@tempdima=#1\relax
      \ifOverFrame
          \typeout{This is not implemented yet}%
          \show\HELP
      \else
         \ifdim\wd0>\@tempdima
            \advance\@tempdima by \@tempdima
            \ifdim\wd0 >\@tempdima
               \setbox1 =\vbox{%
                  \unskip\hbox to \@tempdima{\hfill\GRAPHIC{#5}{#4}{#1}{#2}{#3}\hfill}%
                  \unskip\hbox to \@tempdima{\parbox[b]{\@tempdima}{\QCBOptA}}%
               }%
               \wd1=\@tempdima
            \else
               \textwidth=\wd0
               \setbox1 =\vbox{%
                 \noindent\hbox to \wd0{\hfill\GRAPHIC{#5}{#4}{#1}{#2}{#3}\hfill}\\%
                 \noindent\hbox{\QCBOptA}%
               }%
               \wd1=\wd0
            \fi
         \else
            \ifdim\wd0>0pt
              \hsize=\@tempdima
              \setbox1=\vbox{%
                \unskip\GRAPHIC{#5}{#4}{#1}{#2}{0pt}%
                \break
                \unskip\hbox to \@tempdima{\hfill \QCBOptA\hfill}%
              }%
              \wd1=\@tempdima
           \else
              \hsize=\@tempdima
              \setbox1=\vbox{%
                \unskip\GRAPHIC{#5}{#4}{#1}{#2}{0pt}%
              }%
              \wd1=\@tempdima
           \fi
         \fi
         \@tempdimb=\ht1
         \advance\@tempdimb by -#2
         \advance\@tempdimb by #3
         \leavevmode
         \raise -\@tempdimb \hbox{\box1}%
      \fi
      \egroup%
}%
\def\DFRAME#1#2#3#4#5{%
  \vspace\topsep
  \hfil\break
  \bgroup
     \leftskip\@flushglue
	 \rightskip\@flushglue
	 \parindent\z@
	 \parfillskip\z@skip
     \let\QCTOptA\empty
     \let\QCTOptB\empty
     \let\QCBOptA\empty
     \let\QCBOptB\empty
	 \vbox\bgroup
        \ifOverFrame 
           #5\QCTOptA\par
        \fi
        \GRAPHIC{#4}{#3}{#1}{#2}{\z@}%
        \ifUnderFrame 
           \break#5\QCBOptA
        \fi
	 \egroup
  \egroup
  \vspace\topsep
  \break
}%
\def\FFRAME#1#2#3#4#5#6#7{%
  \@ifundefined{floatstyle}
    {
     \begin{figure}[#1]%
    }
    {
	 \ifx#1h
      \begin{figure}[H]%
	 \else
      \begin{figure}[#1]%
	 \fi
	}
  \let\QCTOptA\empty
  \let\QCTOptB\empty
  \let\QCBOptA\empty
  \let\QCBOptB\empty
  \ifOverFrame
    #4
    \ifx\QCTOptA\empty
    \else
      \ifx\QCTOptB\empty
        \caption{\QCTOptA}%
      \else
        \caption[\QCTOptB]{\QCTOptA}%
      \fi
    \fi
    \ifUnderFrame\else
      \label{#5}%
    \fi
  \else
    \UnderFrametrue%
  \fi
  \begin{center}\GRAPHIC{#7}{#6}{#2}{#3}{\z@}\end{center}%
  \ifUnderFrame
    #4
    \ifx\QCBOptA\empty
      \caption{}%
    \else
      \ifx\QCBOptB\empty
        \caption{\QCBOptA}%
      \else
        \caption[\QCBOptB]{\QCBOptA}%
      \fi
    \fi
    \label{#5}%
  \fi
  \end{figure}%
 }%
\def\makeactives{
  \catcode`\"=\active
  \catcode`\;=\active
  \catcode`\:=\active
  \catcode`\'=\active
  \catcode`\~=\active
}
   \gdef\activesoff{%
      \def"{\string"}%
      \def;{\string;}%
      \def:{\string:}%
      \def'{\string'}%
      \def~{\string~}%
    }
\def\FRAME#1#2#3#4#5#6#7#8{%
 \bgroup
 \ifnum\@msidraft=\@ne
   \wasdrafttrue
 \else
   \wasdraftfalse%
 \fi
 \def\LaTeXparams{}%
 \dispkind=\z@
 \def\LaTeXparams{}%
 \doFRAMEparams{#1}%
 \ifnum\dispkind=\z@\IFRAME{#2}{#3}{#4}{#7}{#8}{#5}\else
  \ifnum\dispkind=\@ne\DFRAME{#2}{#3}{#7}{#8}{#5}\else
   \ifnum\dispkind=\tw@
    \edef\@tempa{\noexpand\FFRAME{\LaTeXparams}}%
    \@tempa{#2}{#3}{#5}{#6}{#7}{#8}%
    \fi
   \fi
  \fi
  \ifwasdraft\@msidraft=1\else\@msidraft=0\fi{}%
  \egroup
 }%
\def\TEXUX#1{"texux"}
\long\def\QQQ#1#2{%
     \long\expandafter\def\csname#1\endcsname{#2}}%
\long\def\QQA#1#2{}%
\def\QTR#1#2{{\csname#1\endcsname {#2}}}%
\def\EXPAND#1[#2]#3{}%
\def\NOEXPAND#1[#2]#3{}%
\def\LaTeXparent#1{}%
\def\ChildStyles#1{}%
\def\ChildDefaults#1{}%
\def\QTagDef#1#2#3{}%
  \providecommand{\UNICODE}[2][]{\protect\rule{.1in}{.1in}}
  \providecommand{\U}[1]{\protect\rule{.1in}{.1in}}
\def\QQfnmark#1{\footnotemark}
 \def\abstract{%
  \if@twocolumn
   \section*{Abstract (Not appropriate in this style!)}%
   \else \small 
   \begin{center}{\bf Abstract\vspace{-.5em}\vspace{\z@}}\end{center}%
   \quotation 
   \fi
  }%
   \def\registered{\relax\ifmmode{}\r@gistered
                    \else$\m@th\r@gistered$\fi}%
 \def\r@gistered{^{\ooalign
  {\hfil\raise.07ex\hbox{$\scriptstyle\rm\text{R}$}\hfil\crcr
  \mathhexbox20D}}}}{}%
\newdimen\theight
\def\newfmtname{LaTeX2e}
  \DeclareOldFontCommand{\rm}{\normalfont\rmfamily}{\mathrm}
  \DeclareOldFontCommand{\sf}{\normalfont\sffamily}{\mathsf}
  \DeclareOldFontCommand{\tt}{\normalfont\ttfamily}{\mathtt}
  \DeclareOldFontCommand{\bf}{\normalfont\bfseries}{\mathbf}
  \DeclareOldFontCommand{\it}{\normalfont\itshape}{\mathit}
  \DeclareOldFontCommand{\sl}{\normalfont\slshape}{\@nomath\sl}
  \DeclareOldFontCommand{\sc}{\normalfont\scshape}{\@nomath\sc}
\def\alpha{{\Greekmath 010B}}%
\def\beta{{\Greekmath 010C}}%
\def\gamma{{\Greekmath 010D}}%
\def\delta{{\Greekmath 010E}}%
\def\epsilon{{\Greekmath 010F}}%
\def\zeta{{\Greekmath 0110}}%
\def\eta{{\Greekmath 0111}}%
\def\theta{{\Greekmath 0112}}%
\def\iota{{\Greekmath 0113}}%
\def\kappa{{\Greekmath 0114}}%
\def\lambda{{\Greekmath 0115}}%
\def\mu{{\Greekmath 0116}}%
\def\nu{{\Greekmath 0117}}%
\def\xi{{\Greekmath 0118}}%
\def\pi{{\Greekmath 0119}}%
\def\rho{{\Greekmath 011A}}%
\def\sigma{{\Greekmath 011B}}%
\def\tau{{\Greekmath 011C}}%
\def\upsilon{{\Greekmath 011D}}%
\def\phi{{\Greekmath 011E}}%
\def\chi{{\Greekmath 011F}}%
\def\psi{{\Greekmath 0120}}%
\def\omega{{\Greekmath 0121}}%
\def\varepsilon{{\Greekmath 0122}}%
\def\vartheta{{\Greekmath 0123}}%
\def\varpi{{\Greekmath 0124}}%
\def\varrho{{\Greekmath 0125}}%
\def\varsigma{{\Greekmath 0126}}%
\def\varphi{{\Greekmath 0127}}%
\def\nabla{{\Greekmath 0272}}
\def\FindBoldGroup{%
   {\setbox0=\hbox{$\mathbf{x\global\edef\theboldgroup{\the\mathgroup}}$}}%
}
\def\Greekmath#1#2#3#4{%
    \if@compatibility
        \ifnum\mathgroup=\symbold
           \mathchoice{\mbox{\boldmath$\displaystyle\mathchar"#1#2#3#4$}}%
                      {\mbox{\boldmath$\textstyle\mathchar"#1#2#3#4$}}%
                      {\mbox{\boldmath$\scriptstyle\mathchar"#1#2#3#4$}}%
                      {\mbox{\boldmath$\scriptscriptstyle\mathchar"#1#2#3#4$}}%
        \else
           \mathchar"#1#2#3#4%
        \fi 
    \else 
        \FindBoldGroup
        \ifnum\mathgroup=\theboldgroup 
           \mathchoice{\mbox{\boldmath$\displaystyle\mathchar"#1#2#3#4$}}%
                      {\mbox{\boldmath$\textstyle\mathchar"#1#2#3#4$}}%
                      {\mbox{\boldmath$\scriptstyle\mathchar"#1#2#3#4$}}%
                      {\mbox{\boldmath$\scriptscriptstyle\mathchar"#1#2#3#4$}}%
        \else
           \mathchar"#1#2#3#4%
        \fi     	    
	  \fi}
\newif\ifGreekBold  \GreekBoldfalse
\let\SAVEPBF=\pbf
\def\pbf{\GreekBoldtrue\SAVEPBF}%
  \newcounter{equationnumber}  
  \def\mathletters{%
     \addtocounter{equation}{1}
     \edef\@currentlabel{\theequation}%
     \setcounter{equationnumber}{\c@equation}
     \setcounter{equation}{0}%
     \edef\theequation{\@currentlabel\noexpand\alph{equation}}%
  }
    \def\BibTeX{{\rm B\kern-.05em{\sc i\kern-.025em b}\kern-.08em
                 T\kern-.1667em\lower.7ex\hbox{E}\kern-.125emX}}}{}%
\def\AmS{{\protect\usefont{OMS}{cmsy}{m}{n}%
                A\kern-.1667em\lower.5ex\hbox{M}\kern-.125emS}}}{}%
\def\@@eqncr{\let\@tempa\relax
    \ifcase\@eqcnt \def\@tempa{& & &}\or \def\@tempa{& &}%
      \else \def\@tempa{&}\fi
     \@tempa
     \if@eqnsw
        \iftag@
           \@taggnum
        \else
           \@eqnnum\stepcounter{equation}%
        \fi
     \fi
     \global\tag@false
     \global\@eqnswtrue
     \global\@eqcnt\z@\cr}
\def\TCItag{\@ifnextchar*{\@TCItagstar}{\@TCItag}}
\def\@TCItag#1{%
    \global\tag@true
    \global\def\@taggnum{(#1)}%
    \global\def\@currentlabel{#1}}
\def\@TCItagstar*#1{%
    \global\tag@true
    \global\def\@taggnum{#1}%
    \global\def\@currentlabel{#1}}
\def\ExitTCILatex{\makeatother }
\if@compatibility\message{amsmath already loaded}\fi\aftergroup\ExitTCILatex}
\if@compatibility\message{amstex already loaded}\fi\aftergroup\ExitTCILatex}
\if@compatibility\message{amsgen already loaded}\fi\aftergroup\ExitTCILatex}
\let\DOTSI\relax
\def\RIfM@{\relax\ifmmode}%
\def\FN@{\futurelet\next}%
\def\iint{\DOTSI\intno@\tw@\FN@\ints@}%
\def\iiint{\DOTSI\intno@\thr@@\FN@\ints@}%
\def\iiiint{\DOTSI\intno@4 \FN@\ints@}%
\def\idotsint{\DOTSI\intno@\z@\FN@\ints@}%
\def\ints@{\findlimits@\ints@@}%
\newif\iflimtoken@
\newif\iflimits@
\def\findlimits@{\limtoken@true\ifx\next\limits\limits@true
 \else\ifx\next\nolimits\limits@false\else
 \limtoken@false\ifx\ilimits@\nolimits\limits@false\else
 \ifinner\limits@false\else\limits@true\fi\fi\fi\fi}%
\def\multint@{\int\ifnum\intno@=\z@\intdots@                          
 \else\intkern@\fi                                                    
 \ifnum\intno@>\tw@\int\intkern@\fi                                   
 \ifnum\intno@>\thr@@\int\intkern@\fi                                 
 \int}
\def\multintlimits@{\intop\ifnum\intno@=\z@\intdots@\else\intkern@\fi
 \ifnum\intno@>\tw@\intop\intkern@\fi
 \ifnum\intno@>\thr@@\intop\intkern@\fi\intop}%
\def\intic@{%
    \mathchoice{\hskip.5em}{\hskip.4em}{\hskip.4em}{\hskip.4em}}%
\def\negintic@{\mathchoice
 {\hskip-.5em}{\hskip-.4em}{\hskip-.4em}{\hskip-.4em}}%
\def\ints@@{\iflimtoken@                                              
 \def\ints@@@{\iflimits@\negintic@
   \mathop{\intic@\multintlimits@}\limits                             
  \else\multint@\nolimits\fi                                          
  \eat@}
 \else                                                                
 \def\ints@@@{\iflimits@\negintic@
  \mathop{\intic@\multintlimits@}\limits\else
  \multint@\nolimits\fi}\fi\ints@@@}%
\def\intkern@{\mathchoice{\!\!\!}{\!\!}{\!\!}{\!\!}}%
\def\plaincdots@{\mathinner{\cdotp\cdotp\cdotp}}%
\def\intdots@{\mathchoice{\plaincdots@}%
 {{\cdotp}\mkern1.5mu{\cdotp}\mkern1.5mu{\cdotp}}%
 {{\cdotp}\mkern1mu{\cdotp}\mkern1mu{\cdotp}}%
 {{\cdotp}\mkern1mu{\cdotp}\mkern1mu{\cdotp}}}%
\def\RIfM@{\relax\protect\ifmmode}
\def\text{\RIfM@\expandafter\text@\else\expandafter\mbox\fi}
\let\nfss@text\text
\def\text@#1{\mathchoice
   {\textdef@\displaystyle\f@size{#1}}%
   {\textdef@\textstyle\tf@size{\firstchoice@false #1}}%
   {\textdef@\textstyle\sf@size{\firstchoice@false #1}}%
   {\textdef@\textstyle \ssf@size{\firstchoice@false #1}}%
   \glb@settings}
\def\textdef@#1#2#3{\hbox{{%
                    \everymath{#1}%
                    \let\f@size#2\selectfont
                    #3}}}
\newif\iffirstchoice@
\def\Let@{\relax\iffalse{\fi\let\\=\cr\iffalse}\fi}%
\def\vspace@{\def\vspace##1{\crcr\noalign{\vskip##1\relax}}}%
\def\multilimits@{\bgroup\vspace@\Let@
 \baselineskip\fontdimen10 \scriptfont\tw@
 \advance\baselineskip\fontdimen12 \scriptfont\tw@
 \lineskip\thr@@\fontdimen8 \scriptfont\thr@@
 \lineskiplimit\lineskip
 \vbox\bgroup\ialign\bgroup\hfil$\m@th\scriptstyle{##}$\hfil\crcr}%
\def\Sb{_\multilimits@}%
\def\endSb{\crcr\egroup\egroup\egroup}%
\def\Sp{^\multilimits@}%
\newdimen\ex@
\def\rightarrowfill@#1{$#1\m@th\mathord-\mkern-6mu\cleaders
 \hbox{$#1\mkern-2mu\mathord-\mkern-2mu$}\hfill
 \mkern-6mu\mathord\rightarrow$}%
\def\leftarrowfill@#1{$#1\m@th\mathord\leftarrow\mkern-6mu\cleaders
 \hbox{$#1\mkern-2mu\mathord-\mkern-2mu$}\hfill\mkern-6mu\mathord-$}%
\def\leftrightarrowfill@#1{$#1\m@th\mathord\leftarrow
\mkern-6mu\cleaders
 \hbox{$#1\mkern-2mu\mathord-\mkern-2mu$}\hfill
 \mkern-6mu\mathord\rightarrow$}%
\def\overrightarrow{\mathpalette\overrightarrow@}%
\def\overrightarrow@#1#2{\vbox{\ialign{##\crcr\rightarrowfill@#1\crcr
 \noalign{\kern-\ex@\nointerlineskip}$\m@th\hfil#1#2\hfil$\crcr}}}%
\def\overleftarrow{\mathpalette\overleftarrow@}%
\def\overleftarrow@#1#2{\vbox{\ialign{##\crcr\leftarrowfill@#1\crcr
 \noalign{\kern-\ex@\nointerlineskip}$\m@th\hfil#1#2\hfil$\crcr}}}%
\def\overleftrightarrow{\mathpalette\overleftrightarrow@}%
\def\overleftrightarrow@#1#2{\vbox{\ialign{##\crcr
   \leftrightarrowfill@#1\crcr
 \noalign{\kern-\ex@\nointerlineskip}$\m@th\hfil#1#2\hfil$\crcr}}}%
\def\underrightarrow{\mathpalette\underrightarrow@}%
\def\underrightarrow@#1#2{\vtop{\ialign{##\crcr$\m@th\hfil#1#2\hfil
  $\crcr\noalign{\nointerlineskip}\rightarrowfill@#1\crcr}}}%
\def\underleftarrow{\mathpalette\underleftarrow@}%
\def\underleftarrow@#1#2{\vtop{\ialign{##\crcr$\m@th\hfil#1#2\hfil
  $\crcr\noalign{\nointerlineskip}\leftarrowfill@#1\crcr}}}%
\def\underleftrightarrow{\mathpalette\underleftrightarrow@}%
\def\underleftrightarrow@#1#2{\vtop{\ialign{##\crcr$\m@th
  \hfil#1#2\hfil$\crcr
 \noalign{\nointerlineskip}\leftrightarrowfill@#1\crcr}}}%
\def\qopnamewl@#1{\mathop{\operator@font#1}\nlimits@}
\let\nlimits@\displaylimits
\def\setboxz@h{\setbox\z@\hbox}
\def\varlim@#1#2{\mathop{\vtop{\ialign{##\crcr
 \hfil$#1\m@th\operator@font lim$\hfil\crcr
 \noalign{\nointerlineskip}#2#1\crcr
 \noalign{\nointerlineskip\kern-\ex@}\crcr}}}}
 \def\rightarrowfill@#1{\m@th\setboxz@h{$#1-$}\ht\z@\z@
  $#1\copy\z@\mkern-6mu\cleaders
  \hbox{$#1\mkern-2mu\box\z@\mkern-2mu$}\hfill
  \mkern-6mu\mathord\rightarrow$}
\def\leftarrowfill@#1{\m@th\setboxz@h{$#1-$}\ht\z@\z@
  $#1\mathord\leftarrow\mkern-6mu\cleaders
  \hbox{$#1\mkern-2mu\copy\z@\mkern-2mu$}\hfill
  \mkern-6mu\box\z@$}
\def\projlim{\qopnamewl@{proj\,lim}}
\def\injlim{\qopnamewl@{inj\,lim}}
\def\varinjlim{\mathpalette\varlim@\rightarrowfill@}
\def\varprojlim{\mathpalette\varlim@\leftarrowfill@}
\def\varliminf{\mathpalette\varliminf@{}}
\def\varliminf@#1{\mathop{\underline{\vrule\@depth.2\ex@\@width\z@
   \hbox{$#1\m@th\operator@font lim$}}}}
\def\varlimsup{\mathpalette\varlimsup@{}}
\def\varlimsup@#1{\mathop{\overline
  {\hbox{$#1\m@th\operator@font lim$}}}}
\def\align{\@verbatim \frenchspacing\@vobeyspaces \@alignverbatim
You are using the "align" environment in a style in which it is not defined.}
\let\csname endalign*\endcsname =\endtrivlist
\def\alignat{\@verbatim \frenchspacing\@vobeyspaces \@alignatverbatim
You are using the "alignat" environment in a style in which it is not defined.}
\let\csname endalignat*\endcsname =\endtrivlist
\def\xalignat{\@verbatim \frenchspacing\@vobeyspaces \@xalignatverbatim
You are using the "xalignat" environment in a style in which it is not defined.}
\let\csname endxalignat*\endcsname =\endtrivlist
\def\gather{\@verbatim \frenchspacing\@vobeyspaces \@gatherverbatim
You are using the "gather" environment in a style in which it is not defined.}
\let\csname endgather*\endcsname =\endtrivlist
\def\multiline{\@verbatim \frenchspacing\@vobeyspaces \@multilineverbatim
You are using the "multiline" environment in a style in which it is not defined.}
\let\csname endmultiline*\endcsname =\endtrivlist
\def\arrax{\@verbatim \frenchspacing\@vobeyspaces \@arraxverbatim
You are using a type of "array" construct that is only allowed in AmS-LaTeX.}
\def\tabulax{\@verbatim \frenchspacing\@vobeyspaces \@tabulaxverbatim
You are using a type of "tabular" construct that is only allowed in AmS-LaTeX.}
\let\csname endarrax*\endcsname =\endtrivlist
\let\csname endtabulax*\endcsname =\endtrivlist
 \def\endequation{%
     \ifmmode\ifinner 
      \iftag@
        \addtocounter{equation}{-1} 
        $\hfil
           \displaywidth\linewidth\@taggnum\egroup \endtrivlist
        \global\tag@false
        \global\@ignoretrue   
      \else
        $\hfil
           \displaywidth\linewidth\@eqnnum\egroup \endtrivlist
        \global\tag@false
        \global\@ignoretrue 
      \fi
     \else   
      \iftag@
        \addtocounter{equation}{-1} 
        \eqno \hbox{\@taggnum}
        \global\tag@false%
        $$\global\@ignoretrue
      \else
        \eqno \hbox{\@eqnnum}
        $$\global\@ignoretrue
      \fi
     \fi\fi
 } 
 \newif\iftag@ \tag@false
 \def\TCItag{\@ifnextchar*{\@TCItagstar}{\@TCItag}}
 \def\@TCItag#1{%
     \global\tag@true
     \global\def\@taggnum{(#1)}%
     \global\def\@currentlabel{#1}}
 \def\@TCItagstar*#1{%
     \global\tag@true
     \global\def\@taggnum{#1}%
     \global\def\@currentlabel{#1}}
     \def\tag{\@ifnextchar*{\@tagstar}{\@tag}}
     \def\@tag#1{%
         \global\tag@true
         \global\def\@taggnum{(#1)}}
     \def\@tagstar*#1{%
         \global\tag@true
         \global\def\@taggnum{#1}}
\def\qed{\hfill$\square$\par}
\def\Qcb#1{#1}
\def\FRAME#1#2#3#4#5#6#7#8
\begin{document}

\title{Anisotropic versions of the Brezis-Van
Schaftingen-Yung approach at $s=1$ and $s=0$}

\pagestyle{plain}

\author[Q. Gu]{Qingsong Gu}
\address[Q. Gu]{Department of Mathematics, Nanjing University, Nanjing 210093, China} \email{\href{mailto: Qingsong Gu
<001gqs@163.com>}{001gqs@163.com} }

\author[Q. Huang]{Qingzhong Huang}
\address[Q. Huang]{College of Data Science, Jiaxing University, Jiaxing, 314001, China} \email{\href{mailto: Qingzhong Huang
<hqz376560571@163.com>}{hqz376560571@163.com} }

\subjclass[2000]{Primary 46E35; Secondary 52A20} \keywords{BBM
formula; MS formula; weak $L^p$ quasinorm; $L^p$ moment body}
\thanks {The second author was supported
by NSFC (No. 11701219).}

\maketitle
\begin{abstract}
In 2014, Ludwig showed the limiting behavior of the anisotropic
Gagliardo $s$-seminorm of a function $f$ as $s\rightarrow 1^-$ and
$s\rightarrow0^+$, which extend the results due to
Bourgain-Brezis-Mironescu(BBM) and Maz'ya-Shaposhnikova(MS)
respectively. Recently, Brezis, Van Schaftingen and Yung
provided a different approach by replacing the strong $L^p$ norm in
the Gagliardo $s$-seminorm by the weak $L^p$ quasinorm. They
characterized the case for $s=1$ that complements the BBM formula.
The corresponding MS formula for $s=0$ was later established by Yung
and the first author. In this paper, we follow the approach of
Brezis-Van Schaftingen-Yung and show the anisotropic versions of
$s=1$ and $s=0$. Our result generalizes the work by Brezis, Van
Schaftingen, Yung and the first author and complements the work by
Ludwig.

\end{abstract}

\bigskip

\section{\large  Introduction}

For $1\leq p<\infty$ and $0<s<1$, the Gagliardo $s$-seminorm of a
function $f\in L^p(\mathbb R^n)$ is defined as
\begin{equation}\label{eq1}
||f||^p_{W^{s,p}(\mathbb R^n)}:=\int_{\mathbb R^n}\int_{\mathbb
R^n}\frac{|f(x)-f(y)|^p}{|x-y|^{n+sp}}dxdy,
\end{equation}
where $|\cdot|$ denotes the Euclidean norm on $\mathbb R^n$. This
seminorm arises in connection with many problems in the theory of
partial differential equations (e.g. \cite{BB02,DPV,Maz}).

The limiting behavior of $||f||^p_{W^{s,p}(\mathbb R^n)}$ for
$s\rightarrow1^-$ was firstly studied by Bourgain, Brezis, and
Mironescu \cite{BB01}, in which they obtained the following BBM
formula: for $1\leq p<\infty$ and $f\in W^{1,p}(\mathbb R^n)$,
\begin{equation}\label{eq2}
\lim\limits_{s\rightarrow1^-}(1-s)||f||^p_{W^{s,p}(\mathbb
R^n)}=\frac{1}{p}k(p,n)||\nabla f||^p_{L^p(\mathbb R^n)},
\end{equation}
where
\begin{equation}\label{k}
k(p,n)=\int_{S^{n-1}}|e\cdot
\omega|^pd\omega=\frac{2\Gamma((p+1)/2)\pi^{(n-1)/2}}{\Gamma((n+p)/2)}.
\end{equation}
Here $e\in S^{n-1}$ is any fixed unit vector and $e\cdot \omega$ is the
inner product of $e,\omega\in S^{n-1}$.

The limiting behavior of $||f||^p_{W^{s,p}(\mathbb R^n)}$ for
$s\rightarrow0^+$ was considered by Maz'ya and Shaposhnikova
\cite{MS}, in which they obtained the following MS formula: if $f\in
W^{s,p}(\mathbb R^n)$ for all $s\in(0,1)$, then
\begin{equation}\label{eq2'}
\lim\limits_{s\rightarrow0^+}s||f||^p_{W^{s,p}(\mathbb
R^n)}=\frac{2n}{p}|B^n|||f||^p_{L^p(\mathbb R^n)},
\end{equation}
where $|B^n|$ is the volume of the Euclidean unit ball $B^n$ in
$\mathbb R^n$.

\medskip
We say that a set $K\subset\mathbb R^n$ is a convex body if it is
compact and convex and has non-empty interior. For an
origin-symmetric convex body $K\subset\mathbb R^n$, the Minkowski
functional $||\cdot||_K$:
\begin{equation}\label{Min}||x||_K=\inf\{\lambda\geq 0:\  x\in\lambda K\}\end{equation}
defines a norm on $\mathbb R^n$ for all $x\in \mathbb R^n$.
Moreover, we use $Z_{p}^*K$ to denote the polar $L^p$ moment body of
$K$, whose norm $||\cdot||_{Z_{p}^*K}^p$ is given by
\begin{equation}\label{moment}
||z||_{Z_{p}^*K}^p=\frac{n+p}{2}\int_K|z\cdot y|^pdy,\qquad\qquad z\in\mathbb R^n.
\end{equation}

In recent years, the research on anisotropic Sobolev spaces have
received considerable attention (see e.g.
\cite{AF,BC,BCS,BCS18,CFR,CNV,FMP,LMP,Ma,NS,XY}). In particular, the
anisotropic Gagliardo $s$-seminorm is obtained by replacing the
Euclidean norm $|x-y|$ by $||x-y||_K$ in $\eqref{eq1}$. The limiting
behavior of the anisotropic Gagliardo $s$-seminorm for
$s\rightarrow1^-$ and $s\rightarrow0^+$ were established by Ludwig
\cite{Lud,Lud2}. More specifically, Ludwig proved the following two
formulas:

\noindent{\bf The anisotropic version of BBM forumla}. If $1\leq
p<\infty$ and $f\in W^{1,p}(\mathbb R^n)$ has compact support, then
\begin{equation}\label{eq3}
\lim\limits_{s\rightarrow1^-}(1-s)\int_{\mathbb R^n}\int_{\mathbb
R^n}\frac{|f(x)-f(y)|^p}{||x-y||^{n+sp}_K}dxdy=\frac{2}{p}\int_{\mathbb
R^n}||\nabla f(x)||_{Z_p^*K}^pdx.
\end{equation}

\noindent{\bf The anisotropic version of MS forumla}. If $1\leq
p<\infty$, $f\in W^{s,p}(\mathbb R^n)$ for all $s\in(0,1)$ and $f$
has compact support, then
\begin{equation}\label{eq3'}
\lim\limits_{s\rightarrow0^+}s\int_{\mathbb R^n}\int_{\mathbb
R^n}\frac{|f(x)-f(y)|^p}{||x-y||^{n+sp}_K}dxdy=\frac{2n}{p}|K|||
f||_{L^p(\mathbb R^n)}^p.
\end{equation}
\noindent From the fact that $||z||_{Z_{p}^*B^n}^p=k(p,n)|z|^p/2$,
it is easy to see that \eqref{eq3} recovers \eqref{eq2} and
\eqref{eq3'} recovers \eqref{eq2'} if $K=B^n$. We shall mention the
$L_p$ moment body $Z_pK$, the polar body of $Z_p^*K$, and the $L_p$
centroid body $\frac{2}{(n+p)|K|}Z_pK$ introduced by Lutwak and
Zhang \cite{LZ} have become important tools used in convex geometry,
probability theory, and the local theory of Banach spaces (see e.g.
\cite{FGP,HS,KM,Lud03,Lud05,LYZ,LYZ00,LYZ02,LYZ04,LYZ10,Pa,PW,P,YY}).

\bigskip

For $1\leq p<\infty$, the Marcinkiewicz (i.e. weak $L^p$) quasinorm
$[\cdot]_{M^p(\mathbb R^n\times\mathbb R^n,\mathcal L^{2n})}$ is
defined by
\begin{equation}\label{Mar}
[g]^p_{M^p(\mathbb R^n\times\mathbb R^n,\mathcal
L^{2n})}:=\sup_{\lambda>0} \lambda^p\mathcal L^{2n}\left(\{x\in
\mathbb R^n\times\mathbb R^n:\ |g(x)|\ge\lambda\}\right),
\end{equation}
$\mathcal L^{2n}$ denotes the Lebesgue measure on $\mathbb R^n\times
\mathbb R^n$. The Marcinkiewicz space $M^p(\mathbb R^n\times\mathbb
R^n,\mathcal L^{2n})$ modeled on $L^p$ contains all the functions
$g$ with $[g]_{M^p(\mathbb R^n\times\mathbb R^n,\mathcal
L^{2n})}<\infty$ (e.g. \cite{CR,G}).

Recently, Brezis, Van Schaftingen and Yung \cite{BSY} provided
a different approach by replacing the strong $L^p$ norm in the
Gagliardo $s$-seminorm by the weak $L^p$ quasinorm, which
complements the BBM formula. Precisely, they proved that there exist
a positive constant $c=c(n)$ such that for all $f\in
C_c^\infty(\mathbb R^n)$ and $1\leq p<\infty$,
\begin{equation}\label{eq5}
\frac{1}{n}k(p,n)||\nabla f||^p_{L^p(\mathbb
R^n)}\leq\left[\frac{f(x)-f(y)}{|x-y|^{\frac
np+1}}\right]^p_{M^p(\mathbb R^n\times\mathbb R^n,\mathcal L^{2n})}
\leq c||\nabla f||^p_{L^p(\mathbb R^n)},
\end{equation}
where $k(p,n)$ is defined in \eqref{k}.  Moreover, if we denote
\begin{equation*}
E_\lambda=\left\{(x,y)\in\mathbb R^n\times\mathbb R^n:\ x\neq
y,\frac{|f(x)-f(y)|}{|x-y|^{\frac{n}{p}+1}}\geq\lambda\right\},
\end{equation*}
then
\begin{equation}\label{eq7}
\lim_{\lambda\rightarrow\infty}\lambda^p\mathcal
L^{2n}(E_{\lambda})=\frac{1}{n}k(p,n)||\nabla f||^p_{L^p(\mathbb
R^n)}.
\end{equation}
Clearly, the first inequality in \eqref{eq5} is a direct consequence
of \eqref{eq7}.

Inspired by this idea, Yung and the first author \cite{GY}
established the corresponding MS formula for $s=0$, i.e. they
showed that for all $1\leq p<\infty$ and all $f\in L^p(\mathbb
R^n)$,
\begin{equation}\label{eq5'}
2|B^n|||f||^p_{L^p(\mathbb
R^n)}\leq\left[\frac{f(x)-f(y)}{|x-y|^{\frac
np}}\right]^p_{M^p(\mathbb R^n\times\mathbb R^n,\mathcal L^{2n})}
\leq 2^{p+1}|B^n|||f||^p_{L^p(\mathbb R^n)}.
\end{equation}
Moreover, if we denote
\begin{equation*}
\widetilde E_\lambda=\left\{(x,y)\in\mathbb R^n\times\mathbb R^n:\ x\neq
y,\frac{|f(x)-f(y)|}{|x-y|^{\frac{n}{p}}}\geq\lambda\right\},
\end{equation*}
then
\begin{equation}\label{eq7'}
\lim_{\lambda\rightarrow0^+}\lambda^p\mathcal L^{2n}(\widetilde
E_{\lambda})=2|B^n|||f||^p_{L^p(\mathbb R^n)}.
\end{equation}
Similarly, the first inequality in \eqref{eq5'} can be deduced from
\eqref{eq7'}. The approach of Brezis-Van Schaftingen-Yung also
inspires the work \cite{BSY2,Pol}.

Motivated by the two formulas of Ludwig (\eqref{eq3} and \eqref{eq3'}), it is natural to
ask what happens if we replace the $|x-y|$ in \eqref{eq5} and
\eqref{eq5'} by $||x-y||_K$?

In this paper, we will give a positive
answer to this question by the following theorem, which establishes
anisotropic versions of formulas \eqref{eq5} and \eqref{eq5'} and
their limiting behaviors \eqref{eq7} and \eqref{eq7'}.
\begin{theorem}\label{mainthm}
Let $1\leq p<\infty$ and let $K$ be an origin-symmetric convex body
in $\mathbb R^n$. Then

\medskip

\noindent \emph{(a)} there exist a positive constant $C=C(n)$ such
that for all $f\in C_c^\infty(\mathbb R^n)$,
\begin{equation}\label{eqqe5'}
\frac{2}{n}\int_{\mathbb R^n}||\nabla
f(x)||_{Z_p^*K}^pdx\leq\left[\frac{f(x)-f(y)}{||x-y||_K^{\frac
np+1}}\right]^p_{M^p(\mathbb R^n\times\mathbb R^n,\mathcal L^{2n})}
\leq C\int_{\mathbb R^n}||\nabla f(x)||_{Z_p^*K}^pdx.
\end{equation}
Moreover, if we denote
\begin{equation*}
E_{\lambda,K}=\left\{(x,y)\in\mathbb R^n\times\mathbb R^n:\ x\neq
y,\frac{|f(x)-f(y)|}{||x-y||_K^{\frac{n}{p}+1}}\geq\lambda\right\},
\end{equation*}
then
\begin{equation}\label{eqqe7'}
\lim_{\lambda\rightarrow\infty}\lambda^p\mathcal
L^{2n}(E_{\lambda,K})=\frac{2}{n}\int_{\mathbb R^n}||\nabla
f||_{Z_p^*K}^pdx.
\end{equation}

\bigskip

\noindent \emph{(b)} for all $f\in L^p(\mathbb R^n)$,
\begin{equation}\label{eqeq5'}
2|K|||f||^p_{L^p(\mathbb
R^n)}\leq\left[\frac{f(x)-f(y)}{||x-y||_K^{\frac
np}}\right]^p_{M^p(\mathbb R^n\times\mathbb R^n,\mathcal L^{2n})}
\leq 2^{p+1}|K|||f||^p_{L^p(\mathbb R^n)}.
\end{equation}
Moreover, if we denote
\begin{equation*}
\widetilde E_{\lambda,K}=\left\{(x,y)\in\mathbb R^n\times\mathbb R^n:\ x\neq
y,\frac{|f(x)-f(y)|}{||x-y||_K^{\frac{n}{p}}}\geq\lambda\right\},
\end{equation*}
then
\begin{equation}\label{eqeq7'}
\lim_{\lambda\rightarrow0^+}\lambda^p\mathcal
L^{2n}(\widetilde E_{\lambda,K})=2|K|||f||^p_{L^p(\mathbb
R^n)}.
\end{equation}

\end{theorem}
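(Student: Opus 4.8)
The plan is to establish each of parts (a) and (b) in three steps: first the easy ``$\leq$'' inequality, by a direct estimate on how large a dilate of $K$ a difference quotient can see; second the precise limit (\eqref{eqqe7'}, resp.\ \eqref{eqeq7'}), by a change of variables that turns the parameter $\lambda$ into a dilation followed by an exchange of a limit with an integral; and third the ``$\geq$'' inequality, which is automatic once the limit is known since the Marcinkiewicz quasinorm is a supremum over $\lambda$ that dominates that limit. Part (b) will be self-contained; part (a) will in addition use the one-dimensional weak-type estimate of \cite{BSY}.

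\emph{Part (b).} If $|f(x)-f(y)|\geq\lambda\|x-y\|_K^{n/p}$, then $\|x-y\|_K\leq(2|f(x)|/\lambda)^{p/n}$ or $\|x-y\|_K\leq(2|f(y)|/\lambda)^{p/n}$; integrating the volume of the corresponding dilate of $K$ in the other variable gives $\mathcal L^{2n}(\widetilde E_{\lambda,K})\leq 2^{p+1}|K|\lambda^{-p}\|f\|^p_{L^p(\mathbb R^n)}$, which is the right-hand inequality of \eqref{eqeq5'}. For the limit, substitute $y=x+\lambda^{-p/n}w$; since $\|\lambda^{-p/n}w\|_K^{n/p}=\lambda^{-1}\|w\|_K^{n/p}$, one obtains $\lambda^p\mathcal L^{2n}(\widetilde E_{\lambda,K})=\int_{\mathbb R^n}\phi_\lambda(w)\,dw$ with $\phi_\lambda(w)=\int_{\mathbb R^n}\mathbf 1[\,|f(x)-f(x+\lambda^{-p/n}w)|\geq\|w\|_K^{n/p}\,]\,dx$. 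Because $\{|f|>\delta\}$ has finite measure for every $\delta>0$, one has $|\{|f|>\delta\}\cap(\{|f|>\delta\}-h)|\to0$ as $|h|\to\infty$, and a short argument then shows $\phi_\lambda(w)\to2\,|\{|f|\geq\|w\|_K^{n/p}\}|$ as $\lambda\to0^+$ for a.e.\ $w$ (precisely, whenever $\|w\|_K^{n/p}$ is a continuity point of $t\mapsto|\{|f|\geq t\}|$). By the layer-cake formula together with the polar identity $\int_{\mathbb R^n}g(\|w\|_K)\,dw=n|K|\int_0^\infty g(r)r^{n-1}\,dr$, the integral over $w$ of this limit equals $2|K|\|f\|^p_{L^p(\mathbb R^n)}$, so Fatou gives $\liminf_{\lambda\to0^+}\lambda^p\mathcal L^{2n}(\widetilde E_{\lambda,K})\geq2|K|\|f\|^p_{L^p(\mathbb R^n)}$. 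For the reverse inequality on the $\limsup$, I would first prove the limit for simple functions of bounded support (there $\phi_\lambda$ is supported in a fixed bounded set and is uniformly bounded, so dominated convergence applies directly), and then for general $f\in L^p$ approximate by such a $g$ and use the inclusion $\widetilde E_{\lambda,K}[f]\subseteq\widetilde E_{(1-\varepsilon)\lambda,K}[g]\cup\widetilde E_{\varepsilon\lambda,K}[f-g]$, applying the simple-function limit to the first piece and the crude bound above to the second, and finally letting $\|f-g\|_{L^p}\to0$ and $\varepsilon\to0$; this yields $\limsup_{\lambda\to0^+}\lambda^p\mathcal L^{2n}(\widetilde E_{\lambda,K})\leq2|K|\|f\|^p_{L^p(\mathbb R^n)}$. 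Combining the two gives \eqref{eqeq7'}, and the left-hand inequality of \eqref{eqeq5'} follows at once.

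\emph{Part (a), the limit.} Writing $y=x+z$ and scaling $z=\lambda^{-p/n}u$ gives $\lambda^p\mathcal L^{2n}(E_{\lambda,K})=\int_{\mathbb R^n}I_x(\lambda)\,dx$ with $I_x(\lambda)=\int_{\mathbb R^n}\mathbf 1[\,\lambda^{p/n}|f(x)-f(x+\lambda^{-p/n}u)|\geq\|u\|_K^{(n+p)/p}\,]\,du$. For $f\in C_c^\infty(\mathbb R^n)$ one has $\lambda^{p/n}(f(x)-f(x+\lambda^{-p/n}u))=-\nabla f(x)\cdot u+O(\lambda^{-p/n}|u|^2)$, and the admissible $u$ stay in a fixed bounded set (since $\lambda^{p/n}|f(x)-f(x+\lambda^{-p/n}u)|\leq\|\nabla f\|_\infty|u|$), so $I_x(\lambda)\to|\{u\in\mathbb R^n:|\nabla f(x)\cdot u|\geq\|u\|_K^{(n+p)/p}\}|$. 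In polar coordinates $u=r\theta$ with $\theta\in\partial K$ the defining condition reads $r\leq|\nabla f(x)\cdot\theta|^{p/n}$, so this limit equals $\tfrac1n\int_{\partial K}|\nabla f(x)\cdot\theta|^p\,d\mu_K(\theta)=\tfrac{n+p}{n}\int_K|\nabla f(x)\cdot\zeta|^p\,d\zeta=\tfrac2n\|\nabla f(x)\|^p_{Z_p^*K}$, by \eqref{moment}. Since $0\leq I_x(\lambda)$ is bounded by a constant independent of $x$ and $\lambda$, and, for all large $\lambda$, $I_x(\lambda)$ vanishes for $x$ outside a fixed bounded neighbourhood of $\mathrm{supp}\,f$, dominated convergence gives \eqref{eqqe7'}, whence the left-hand inequality of \eqref{eqqe5'}.

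\emph{Part (a), the upper bound: the main obstacle.} The remaining right-hand inequality of \eqref{eqqe5'} must hold for \emph{every} $\lambda>0$, in particular for small $\lambda$, where $E_{\lambda,K}$ is very large and none of the above applies; this is the genuinely hard part, and I would prove it by adapting the argument of \cite{BSY} behind \eqref{eq5}. Writing $\mathcal L^{2n}(E_{\lambda,K})=\int_{\mathbb R^n}\int_{\mathbb R^n}\mathbf 1[\,|f(y+u)-f(y)|\geq\lambda\|u\|_K^{n/p+1}\,]\,dy\,du$ and restricting the $y$-integral to lines parallel to $u$, one controls the contribution of each line by a one-dimensional weak-$L^p$ estimate for the difference quotient of $\tau\mapsto f(a+\tau\omega)$, $\omega=u/|u|$, in terms of $\int_{\mathbb R}|\nabla f(a+\tau\omega)\cdot\omega|^p\,d\tau$, as in \cite{BSY}; integrating over the affine parameter $a$ and then over $u=r\omega$ in polar coordinates, the angular factor $\|\omega\|_K^{n/p+1}$ produced by writing $\|u\|_K^{n/p+1}$ in polar form recombines with the directional $L^p$-norms to rebuild $\int_{\mathbb R^n}\int_K|\nabla f(x)\cdot\zeta|^p\,d\zeta\,dx$, that is, up to a dimensional constant, $\int_{\mathbb R^n}\|\nabla f(x)\|^p_{Z_p^*K}\,dx$, via the identity $\int_{S^{n-1}}\|\omega\|_K^{-(n+p)}|v\cdot\omega|^p\,d\omega=(n+p)\int_K|v\cdot\zeta|^p\,d\zeta$. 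The delicate point is to organise this bookkeeping so that the resulting constant depends only on $n$, and not on $K$ or $p$ (and, in the borderline case $p=1$, to replace any maximal-function reasoning by its weak-type substitute); once this is carried out, assembling the three steps of each part completes the proof of Theorem~\ref{mainthm}.
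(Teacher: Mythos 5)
Your treatment of part (b) and of the limit \eqref{eqqe7'} in part (a) is correct, and it takes a genuinely different route from the paper. For (b), the paper splits $\widetilde E_{\lambda,K}$ according to $\|y\|_K\gtrless\|x\|_K$, computes the measure of the admissible $y$'s for $x\in rK$ exactly up to an error $\pm|K|r^n$ (inequality \eqref{m1}), and passes to general $f\in L^p$ by the truncation $f_r=f\mathbf 1_{rK}$; your substitution $y=x+\lambda^{-p/n}w$ together with the decorrelation fact $|\{|f|>\delta\}\cap(\{|f|>\delta\}-h)|\to0$ as $|h|\to\infty$ and the layer-cake/polar identity reaches the same limit, and your Fatou argument even gives the liminf for general $f\in L^p$ directly, whereas the paper routes both bounds through compactly supported functions (your limsup argument via simple functions and the $(1\mp\varepsilon)\lambda$ split is in spirit the paper's Case~2). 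Likewise, for the limit in (a) the paper builds explicit radial intervals $I_{\lambda,\delta}(x,\omega)$ and $J_\lambda(x,\omega)$ and uses monotone/dominated convergence in Euclidean polar coordinates (Lemma \ref{L2}), while your scaling $z=\lambda^{-p/n}u$ plus Taylor expansion and dominated convergence in $K$-polar coordinates is an equally valid and arguably cleaner repackaging; your identity $\int_{S^{n-1}}\|\omega\|_K^{-(n+p)}|v\cdot\omega|^p\,d\omega=(n+p)\int_K|v\cdot\zeta|^p\,d\zeta=2\|v\|^p_{Z_p^*K}$ is exactly how the paper converts to \eqref{moment}. The deduction of the two left-hand inequalities from the limits is the same as in the paper.

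The genuine gap is the right-hand inequality of \eqref{eqqe5'}, which is the technical heart of the paper and which you leave at the level of a plan, explicitly deferring the ``bookkeeping'' to an adaptation of \cite{BSY}. Your plan is in substance the paper's proof: fixing $u=r\omega$, slicing the $y$-integral along lines parallel to $\omega$, and then integrating $r^{n-1}\,dr\,d\omega$ is precisely the decomposition the paper implements via the Blaschke--Petkantschin formula \eqref{BP} in Proposition \ref{BSYProp2.2}. But the per-line input you invoke --- ``a one-dimensional weak-$L^p$ estimate for the difference quotient in terms of $\int_{\mathbb R}|\nabla f(a+\tau\omega)\cdot\omega|^p\,d\tau$'' --- is not a plain weak-type bound and cannot be: because of the Jacobian $r^{n-1}$ (equivalently $|x-y|^{n-1}$), what is needed is the \emph{weighted} estimate of Proposition \ref{BSYProp2.1}, $\iint_{E(G,n)}|s-t|^{n-1}\leq C\,5^n n^{-1}\|G\|_{L^1(\mathbb R)}$, applied after the H\"older step that converts $|f(x)-f(y)|\geq\lambda\|x-y\|_K^{\frac np+1}$ on a line into $\bigl|\int_{s_y}^{s_x}|\nabla f(\hat x+s\omega)\cdot\omega|^p\,ds\bigr|\geq\lambda^p\|\omega\|_K^{n+p}|x-y|^{n+1}$, i.e.\ with $G=|\nabla f(\hat x+s\omega)\cdot\omega|^p/(\lambda^p\|\omega\|_K^{n+p})$. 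Once this is stated, the ``delicate point'' you flag disappears: the constant $C5^n/n$ depends only on $n$, and all $\lambda$- and $K$-dependence sits inside $G$, recombining through your angular identity into $\frac{2C}{\lambda^p}\int_{\mathbb R^n}\|\nabla f\|^p_{Z_p^*K}\,dx$, exactly as in Lemma \ref{L1}. So the missing content is not a wrong idea but an unstated key lemma and reduction: as written, your upper-bound argument does not stand on its own, and filling it in amounts to reproducing the paper's Proposition \ref{BSYProp2.2} and the first half of Lemma \ref{L1}.
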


Also, the first inequalities in \eqref{eqqe5'} and \eqref{eqeq5'}
can be deduced from \eqref{eqqe7'} and \eqref{eqeq7'}, respectively.
Furthermore, we shall mention that by letting $K=B^n$, the fact that
$||z||_{Z_{p}^*B^n}^p=k(p,n)|z|^p/2$ yields that formula
\eqref{eqqe5'} recovers formula \eqref{eq5} (the second inequality
in \eqref{eqqe5'} differs from \eqref{eq5} up to the constant
$k(p,n)$). Similarly formula \eqref{eqeq5'} recovers formula
\eqref{eq5'} if $K=B^n$.

\section{\large Proof of Theorem \ref{mainthm}}\label{sec2}
Throughout the paper, we always assume $p\in[1,\infty)$ and $K$
denotes an origin-symmetric convex body in $\mathbb R^n$.
\subsection*{Proof of Theorem \ref{mainthm}(a)}
We first give some notations. For $\omega\in S^{n-1}$, let
$\omega^{\perp}=\{x\in \mathbb R^n: x\cdot \omega=0 \}$ and
$L_{\omega}=\{s \omega:s\in \mathbb R\}$. Let $\text{Aff}(n,1)$ be
the Grassmannian of lines in $\mathbb R^n$. Then for any $L\in
\text{Aff}(n,1)$, we can write $L=\hat{x}+L_{\omega}$ for some
$\omega\in S^{n-1}$ and $\hat{x}\in \omega^{\perp}$. Furthermore, we
may write $x=\hat{x}+s_x\omega$ for any point $x\in L$. The
following Blaschke-Petkantschin formula is from integral geometry (see
e.g. \cite[Theorem 7.2.7]{SW}).
\begin{equation}\label{BP}
\int_{\mathbb R^n}\int_{\mathbb R^n}g(x,y)d\mathcal H^n(x)d\mathcal
H^n(y)=\int_{\text{Aff}(n,1)}\int_L\int_Lg(x,y)|x-y|^{n-1}d\mathcal
H^1(x)d\mathcal H^1(y)dL,
\end{equation}
where $\mathcal H^k$ denotes the $k$-dimensional Hausdorff measure
on $\mathbb R^n$ and $dL$ denotes a suitably normalized rigid motion
invariant Haar measure on $\text{Aff}(n,1)$. Moreover, it follows
from \cite[Theorem 13.2.12]{SW} that for every measurable function
$h:\text{Aff}(n,1)\rightarrow[0,\infty)$,
\begin{equation}\label{atos}
\int_{\text{Aff}(n,1)}h(L)dL=\frac{1}{2}\int_{S^{n-1}}\int_{\omega^{\perp}}h(\hat{x}+L_{\omega})d\mathcal
H^{n-1}(\hat{x})d\mathcal H^{n-1}(\omega).
\end{equation}

The following remarkable Proposition for the one-dimensional case
established by Brezis, Van Schaftingen, and Yung \cite[Proposition
2.1]{BSY} plays a central role in the proof of formula \eqref{eq5}.
\begin{proposition}\label{BSYProp2.1}
There exists a universal constant $C$ such that for all $\gamma>0$
and $F\in C_c(\mathbb R)$ that
\begin{equation*}
\iint_{E(F,\gamma)}|x-y|^{\gamma-1}d\mathcal H^1(x)d\mathcal
H^1(y)\leq C\frac{5^{\gamma}}{\gamma}||F||_{L^1(\mathbb R)},
\end{equation*}
where \begin{equation}\label{EFL}E(F,\gamma):=\left\{(x,y)\in\mathbb
R\times\mathbb R:\ x\neq y,
\left|\int_y^xF\right|\geq|x-y|^{\gamma+1}\right\}.\end{equation}
\end{proposition}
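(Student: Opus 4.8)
The plan is to reduce the two–dimensional estimate to a one–variable problem governed by ``windows'' of $F$, and then exploit that the constraint defining $E(F,\gamma)$ only ever involves windows whose $F$–mass is comparable to (length)$^{\gamma+1}$.

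First I would reduce. Since $\bigl|\int_y^x F\bigr|\le\int_{[y,x]}|F|$ we have $E(F,\gamma)\subseteq E(|F|,\gamma)$ while $\||F|\|_{L^1}=\|F\|_{L^1}$, so we may assume $F\ge 0$. Let $G(t):=\int_{-\infty}^t F$; then $G$ is nondecreasing, $G(+\infty)=\|F\|_{L^1}=:M$, and $\int_y^x F=G(x)-G(y)$ for $x>y$. Using the symmetry $(x,y)\mapsto(y,x)$, it suffices to bound
\[
\mathcal A:=\iint_{E^+}(x-y)^{\gamma-1}\,d\mathcal H^1(x)\,d\mathcal H^1(y),\qquad E^+:=\{x>y:\ G(x)-G(y)\ge(x-y)^{\gamma+1}\}.
\]
A first structural remark: since $G(x)-G(y)\le M$, every $(x,y)\in E^+$ obeys $x-y\le M^{1/(\gamma+1)}$, so the whole estimate lives at scales $\le M^{1/(\gamma+1)}$ — exactly where one expects a factor $\|F\|_{L^1}$ to materialise. (The inequality is invariant under $F\rightsquigarrow\lambda^{-\gamma}F(\lambda\,\cdot)$, so one may also normalise $M=1$.)

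The core idea is the elementary bound $(x-y)^{\gamma-1}=(x-y)^{\gamma+1}(x-y)^{-2}\le\bigl(G(x)-G(y)\bigr)(x-y)^{-2}$, which however is sharp only near $\partial E^+$; so I would write $E^+=\bigcup_{k\ge0}E^{+}_{k}$ as a disjoint union according to the dyadic size of the overshoot $\bigl(G(x)-G(y)\bigr)/(x-y)^{\gamma+1}\in[2^k,2^{k+1})$, which upgrades the bound on $E^{+}_{k}$ by the gain $2^{-k}$. Substituting $G(x)-G(y)=\int_{\mathbb R}F(s)\,\mathbf{1}_{(y,x)}(s)\,ds$ and using Fubini's theorem, one is reduced to
\[
\mathcal A\ \le\ \sum_{k\ge0}2^{-k}\int_{\mathbb R}F(s)\,\Psi_k(s)\,ds,\qquad \Psi_k(s):=\int_0^\infty\frac{1}{w^2}\,\bigl|\{u\in(0,w):\ P_w(u)/w^{\gamma+1}\in[2^k,2^{k+1})\}\bigr|\,dw,
\]
where, after the change of variables $x=s+u$, $y=s-v$, $w=u+v$, the quantity $P_w(u):=\int_{s-v}^{s+u}F$ is the $F$–mass of the length–$w$ window through $s$ positioned by $u$. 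The condition $P_w(u)\asymp 2^k w^{\gamma+1}$ is self–limiting from both ends: from above because $P_w(u)\le M$; and from below because at small $w$ the mass of every window through $s$ is $\approx(\text{local density of }F\text{ at }s)\cdot w$, so the band can only be entered on a bounded multiplicative range of $w$. A Vitali–type covering of the sliding windows realising the constraint should show this persists at every scale, giving $\Psi_k(s)\lesssim(1+k)\,5^{\gamma}/\gamma$ uniformly in $s$ and $F$; then summing $\sum_k2^{-k}(1+k)$ and using $\int_{\mathbb R}F\,ds=M$ yields $\mathcal A\lesssim 5^{\gamma}\|F\|_{L^1}/\gamma$, as desired.

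The hard part is the uniform estimate for $\Psi_k(s)$, and the naive routes fail exactly here. If one bounds $(x-y)^{\gamma-1}$ by its supremum on a scale $2^j\le x-y<2^{j+1}$ and the Lebesgue measure of the corresponding slab by Chebyshev applied to $x\mapsto G(x)-G(x-t)$ (whose integral is $t\|F\|_{L^1}$), one gets a per–scale contribution $\lesssim 2^{\gamma}\|F\|_{L^1}$ that does not decay in $j$, so the sum over the a priori infinitely many scales $2^j\le M^{1/(\gamma+1)}$ diverges; likewise, bounding $\int_{S_y}t^{\gamma-1}\,dt\le\tau(y)^\gamma/\gamma$ with $\tau(y)=\sup\{t:G(y+t)-G(y)\ge t^{\gamma+1}\}$ and then $|\{\tau>\lambda\}|\le M\lambda^{-\gamma}$ (via $\{\tau>\lambda\}\subseteq\{M^+F>\lambda^\gamma\}$ and the rising–sun lemma) produces $\mathcal A\lesssim M\int_0^{M^{1/(\gamma+1)}}d\lambda/\lambda$, again logarithmically divergent. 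What must be done instead is to localise, at small scales, the set of windows carrying mass $\ge(\text{length})^{\gamma+1}$ to the part of $\operatorname{supp}F$ that is ``dense at that scale'', and to show that such windows, occurring at many nearby scales, cannot accumulate in the $\int w^{-2}\,dw$ integral; this covering/stopping–time step, together with the extraction of the clean constant $5^{\gamma}$ rather than a cruder $C^{\gamma}$, is where the bulk of the work lies.
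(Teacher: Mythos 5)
Your proposal does not reach a proof: after the (correct) reduction to $F\ge 0$ and the dyadic splitting in the overshoot parameter, everything is made to hinge on the uniform estimate $\Psi_k(s)\lesssim(1+k)5^{\gamma}/\gamma$, and at precisely that point you offer only the phrase that ``a Vitali--type covering of the sliding windows \dots should show this,'' before conceding yourself that this covering/stopping--time step is where the bulk of the work lies. That estimate is a multi--scale statement of essentially the same nature and difficulty as the Proposition itself: your own (accurate) discussion of why the slab--by--slab bound and the rising--sun/maximal--function route each produce a logarithmically divergent sum applies verbatim to the task of controlling $\int_0^{\infty}w^{-2}\,|\{u\in(0,w):P_w(u)\asymp 2^kw^{\gamma+1}\}|\,dw$ over the unboundedly many scales $w\le\|F\|_{L^1}^{1/(\gamma+1)}$. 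The heuristic you give for the band being ``self--limiting'' (mass of a small window $\approx$ local density times $w$) is a statement about Lebesgue points of fixed positive density and carries no uniformity in $s$ and $F$; nothing in the proposal controls how often a sliding window can re-enter the band as $w$ varies. So what you have is a reformulation plus an honest description of the obstruction, not an argument.

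For comparison: the present paper does not prove the Proposition (it quotes it from \cite{BSY}), and the argument there shows that the missing step can be bypassed entirely by covering in the original two--dimensional picture rather than after a Fubini in $s$. One notes that for every $(x,y)\in E(F,\gamma)$ the interval $I=[x\wedge y,\,x\vee y]$ satisfies $\int_I|F|\ge|I|^{\gamma+1}$, hence $|I|\le\|F\|_{L^1(\mathbb R)}^{1/(\gamma+1)}$; applying the Vitali $5r$--covering lemma to this family of intervals produces a countable disjoint subfamily $\{I_j\}$ such that every such $I$ is contained in some $5I_j$, so that $E(F,\gamma)\subseteq\bigcup_j 5I_j\times 5I_j$. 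Since
\begin{equation*}
\iint_{5I_j\times 5I_j}|x-y|^{\gamma-1}\,d\mathcal H^1(x)\,d\mathcal H^1(y)=\frac{2\,(5|I_j|)^{\gamma+1}}{\gamma(\gamma+1)}\le\frac{10\cdot 5^{\gamma}}{\gamma}\int_{I_j}|F|,
\end{equation*}
summing over the disjoint $I_j$ gives the claim with constant $10$, and this is exactly where the factor $5^{\gamma}/\gamma$ comes from. Covering the intervals $[y,x]$ themselves at all scales simultaneously is what your window-by-window formulation forfeits; if you import that covering, your dyadic decomposition in $k$ becomes unnecessary.
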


Now we apply the Blaschke-Petkantschin formula to the above
Proposition and obtain an inequality generalizing \cite[Proposition 2.2]{BSY} (see the following Proposition \ref{BSYProp2.2}). It will be used in the proof of Theorem \ref{mainthm}(a).

For this purpose, we let $F$ be an operator
\begin{equation*}
F:\ \text{Aff}(n,1)\rightarrow C_c(\mathbb R),
\end{equation*}
 i.e., $F$ maps a line $L$ to a compactly supported continuous function $F_L$ defined on the line
 $L$.
Note that for any $L\in \text{Aff}(n,1)$, we can write
$L=\hat{x}+L_{\omega}$ for some $\omega\in S^{n-1}$ and $\hat{x}\in
\omega^{\perp}$. Thus, we may use $F_L(\hat{x}+s\omega)$ to denote
the function of $s\in\mathbb R$.

\begin{proposition}\label{BSYProp2.2}
For any positive integer $n$, there exists a constant $C=C(n)$ such
that for any operator $F:\ \emph{Aff}(n,1)\rightarrow C_c(\mathbb
R)$,
\begin{equation*}\label{dd}
\mathcal L^{2n}(E(F))\leq
C\int_{S^{n-1}}\int_{\omega^{\perp}}\int_{\mathbb
R}\left|F_{\hat{x}+L_{\omega}}(\hat{x}+s\omega)\right|d\mathcal
H^1(s)d\mathcal H^{n-1}(\hat{x})d\mathcal H^{n-1}(\omega),
\end{equation*}
where
$$E(F)=\left\{(x,y)\in \mathbb R^n\times\mathbb R^n:\ x\neq
y,\left|\int_{s_y}^{s_x}F_{L}(\hat{x}+s\omega)ds\right|\geq|x-y|^{n+1},
\text{$L$~passing~through~$x$ and $y$}\right\}.$$

\end{proposition}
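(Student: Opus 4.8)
The plan is to deduce this from the one-dimensional Proposition \ref{BSYProp2.1} by integrating over the space of lines with the Blaschke--Petkantschin formula \eqref{BP}, following the scheme of \cite[Proposition 2.2]{BSY}. The point is that the $2n$-dimensional measure of $E(F)$ disintegrates into a family of one-dimensional estimates, one for each affine line, and on each line the defining condition of $E(F)$ is precisely the condition appearing in Proposition \ref{BSYProp2.1}.

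First I would apply \eqref{BP} to the nonnegative function $g(x,y)=\mathbf{1}_{E(F)}(x,y)$; since $\mathcal L^{2n}$ is the product of two copies of $\mathcal H^n$ on $\mathbb R^n$, this gives
\[
\mathcal L^{2n}(E(F))=\int_{\text{Aff}(n,1)}\Big(\int_L\int_L\mathbf{1}_{E(F)}(x,y)\,|x-y|^{n-1}\,d\mathcal H^1(x)\,d\mathcal H^1(y)\Big)\,dL .
\]
Next, fix a line $L$ and write $L=\hat x+L_\omega$, so that $x\in L$ has the form $x=\hat x+s_x\omega$ and $|x-y|=|s_x-s_y|$. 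For almost every pair $(x,y)$ with $x\neq y$ there is a unique line through $x$ and $y$, and for points of $L$ the condition defining $E(F)$ becomes $\bigl|\int_{s_y}^{s_x}F_L(\hat x+s\omega)\,ds\bigr|\ge|s_x-s_y|^{n+1}$, which is exactly the membership condition of the set $E(F_L,n)$ in \eqref{EFL} with $\gamma=n$, the function $F_L$ being regarded as a compactly supported continuous function of the single real variable $s$. Hence the inner double integral equals $\iint_{E(F_L,n)}|s_x-s_y|^{n-1}\,ds_x\,ds_y$, and Proposition \ref{BSYProp2.1} with $\gamma=n$ bounds it above by $C\,5^n n^{-1}\,\|F_L\|_{L^1(\mathbb R)}$, i.e.\ by a dimensional constant times $\|F_L\|_{L^1(\mathbb R)}$.

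It then only remains to integrate this bound over $\text{Aff}(n,1)$ and to rewrite the resulting integral over lines as an integral over $S^{n-1}\times\omega^\perp$ by means of \eqref{atos}, applied to $h(L)=\iint_{E(F_L,n)}|s_x-s_y|^{n-1}\,ds_x\,ds_y$. Using $\|F_{\hat x+L_\omega}\|_{L^1(\mathbb R)}=\int_{\mathbb R}|F_{\hat x+L_\omega}(\hat x+s\omega)|\,d\mathcal H^1(s)$, this produces the asserted estimate with $C(n)=\tfrac12\,C\,5^n n^{-1}$.

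The one point that genuinely needs care is measurability: to invoke \eqref{BP} and \eqref{atos} as Tonelli-type identities for nonnegative integrands one must know that $E(F)$ is $\mathcal L^{2n}$-measurable and that $L\mapsto h(L)$ is measurable on $\text{Aff}(n,1)$; this is the only place where an (implicit) joint-measurability assumption on the operator $F$ is used, and it holds automatically in the application to Theorem \ref{mainthm}(a), where $F_L$ is built from the restrictions of $\nabla f$ to lines. Beyond this bookkeeping the entire content of the statement is carried by Proposition \ref{BSYProp2.1}, so no further analytic obstacle arises.
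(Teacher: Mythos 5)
Your proposal is correct and follows essentially the same route as the paper: apply the Blaschke--Petkantschin formula \eqref{BP} to the indicator of $E(F)$, identify the restriction of $E(F)$ to each line $L$ with the one-dimensional set $E(\widetilde F_L,n)$ of \eqref{EFL}, invoke Proposition \ref{BSYProp2.1} with $\gamma=n$, and convert the resulting integral over $\text{Aff}(n,1)$ via \eqref{atos}. Your added remark on measurability is a reasonable point of bookkeeping that the paper leaves implicit, but it does not change the argument.
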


\begin{proof} Denote by $\textbf{1}_{E(F)}$ the indicator function of the set
$E(F)$. The Blaschke-Petkantschin formula \eqref{BP} with
$g(x,y)=\textbf{1}_{E(F)}$ yields that
\begin{equation}\label{eqB-P1}
\mathcal L^{2n}(E(F))=\int_{\mathbb R^n}\int_{\mathbb
R^n}\textbf{1}_{E(F)}d\mathcal H^n(x)d\mathcal
H^n(y)=\int_{\text{Aff}(n,1)}\int_L\int_L\textbf{1}_{E(F)}\cdot|x-y|^{n-1}d\mathcal
H^1(x)d\mathcal H^1(y)dL.
\end{equation}
Denote by $\widetilde{F}_L(s)=F_L(\hat{x}+s\omega)$. Then $E(F)$ restricted on $L$ is just
 $E(\widetilde{F}_L,n)$ defined in \eqref{EFL}. Applying
Proposition \ref{BSYProp2.1} to  $\widetilde{F}_L$, we get
\begin{align}
&\int_L\int_L\textbf{1}_{E(F)}\cdot|x-y|^{n-1}d\mathcal H^1(x)d\mathcal H^1(y)\notag\\
=&\iint_{E(\widetilde{F}_L,n)}|s_x-s_y|^{n-1}d\mathcal H^1(s_x)d\mathcal H^1(s_y)\notag\\
\leq& C\frac{5^n}{n}\int_{\mathbb
R}\left|\widetilde{F}_L(s)\right|d\mathcal
H^1(s)=C\frac{5^n}{n}\int_{\mathbb
R}\left|F_L(\hat{x}+s\omega)\right|d\mathcal H^1(s).\label{eqB-P2}
\end{align}
Substituting \eqref{eqB-P2} into \eqref{eqB-P1}, together with
\eqref{atos}, we obtain
\begin{align*}
\mathcal L^{2n}(E(F))\leq
&C\frac{5^n}{n}\int_{\text{Aff}(n,1)}\int_{\mathbb
R}\left|F_L(\hat{x}+s\omega)\right|d\mathcal H^1(s)dL\notag\\
=&C\frac{5^n}{2n}\int_{S^{n-1}}\int_{\omega^{\bot}}\int_{\mathbb
R}\left|F_{\hat{x}+L_{\omega}}(\hat{x}+s\omega)\right|d\mathcal
H^1(s)d\mathcal H^{n-1}(\hat{x})d\mathcal H^{n-1}(\omega),
\end{align*}
as desired.
\end{proof}

Inspired by the technique developed in \cite{BSY}, the proof of
Theorem \ref{mainthm}(a) can be divided by the following two lemmas.
The second inequality of \eqref{eqqe5'} follows from Lemma \ref{L1}.
The limiting behavior \eqref{eqqe7'} will be established in Lemma
\ref{L2}.

\begin{lemma}\label{L1}
There exists $C=C(n)>0$ such that for all
$f\in C_c^\infty(\mathbb R^n)$,
\begin{equation}\label{eqrig}
\left[\frac{f(x)-f(y)}{||x-y||_K^{\frac np+1}}\right]^p_{M^p(\mathbb
R^n\times\mathbb R^n,\mathcal L^{2n})} \leq C\int_{\mathbb
R^n}||\nabla f||_{Z_p^*K}^pdx.
\end{equation}
\end{lemma}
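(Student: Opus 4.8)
The plan is to fix $\lambda>0$, estimate the measure of $E_{\lambda,K}=\{(x,y)\in\mathbb R^n\times\mathbb R^n:\ x\neq y,\ |f(x)-f(y)|\ge\lambda\|x-y\|_K^{n/p+1}\}$ by means of Proposition~\ref{BSYProp2.2}, and then take the supremum over $\lambda$ to recover the Marcinkiewicz quasinorm in \eqref{eqrig}. The first move is to put the two-point condition into the operator form required by Proposition~\ref{BSYProp2.2}. For $x\neq y$ let $L=\hat x+L_\omega$ be the unique line through $x$ and $y$, so that $x-y=(s_x-s_y)\omega$, whence $|x-y|=|s_x-s_y|$ and $\|x-y\|_K=|s_x-s_y|\,\|\omega\|_K$. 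Writing $f(x)-f(y)=\int_{s_y}^{s_x}\partial_\omega f(\hat x+t\omega)\,dt$ with $\partial_\omega f=\nabla f\cdot\omega$, H\"older's inequality on the segment gives $|f(x)-f(y)|^p\le|s_x-s_y|^{\,p-1}\bigl|\int_{s_y}^{s_x}|\partial_\omega f(\hat x+t\omega)|^p\,dt\bigr|$. Comparing this with the defining inequality of $E_{\lambda,K}$ raised to the power $p$ and dividing by $|s_x-s_y|^{p-1}$, one sees that on $E_{\lambda,K}$
\[ \Bigl|\int_{s_y}^{s_x}|\partial_\omega f(\hat x+t\omega)|^p\,dt\Bigr|\ \ge\ \lambda^p\,\|\omega\|_K^{\,n+p}\,|x-y|^{\,n+1}. \]
This dictates the choice of the operator $F\colon\text{Aff}(n,1)\to C_c(\mathbb R)$ given by $F_L(\hat x+t\omega):=\lambda^{-p}\,\|\omega\|_K^{-(n+p)}\,|\partial_\omega f(\hat x+t\omega)|^p$; it is continuous and compactly supported along each line because $f\in C_c^\infty(\mathbb R^n)$ and $\lambda,\|\omega\|_K$ are constant along $L$. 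With this $F$ the last display is exactly the condition defining $E(F)$, hence $E_{\lambda,K}\subseteq E(F)$.

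Applying Proposition~\ref{BSYProp2.2} to this $F$ yields
\[ \mathcal L^{2n}(E_{\lambda,K})\le \frac{C(n)}{\lambda^p}\int_{S^{n-1}}\frac{1}{\|\omega\|_K^{\,n+p}}\int_{\omega^\perp}\int_{\mathbb R}|\partial_\omega f(\hat x+s\omega)|^p\,d\mathcal H^1(s)\,d\mathcal H^{n-1}(\hat x)\,d\mathcal H^{n-1}(\omega). \]
For each fixed $\omega$, Fubini's theorem (decomposing $\mathbb R^n=\omega^\perp\oplus L_\omega$) collapses the two inner integrals into $\int_{\mathbb R^n}|\nabla f(x)\cdot\omega|^p\,dx$, so after interchanging the order of integration
\[ \lambda^p\,\mathcal L^{2n}(E_{\lambda,K})\ \le\ C(n)\int_{\mathbb R^n}\Bigl(\int_{S^{n-1}}\frac{|\nabla f(x)\cdot\omega|^p}{\|\omega\|_K^{\,n+p}}\,d\omega\Bigr)dx. \]

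It remains to identify the inner spherical integral with the polar $L^p$ moment body. Passing to polar coordinates $y=r\omega$ in \eqref{moment} and using that the radial function of $K$ equals $1/\|\omega\|_K$, one computes $\int_{S^{n-1}}\|\omega\|_K^{-(n+p)}|z\cdot\omega|^p\,d\omega=(n+p)\int_K|z\cdot y|^p\,dy=2\|z\|_{Z_p^*K}^p$ for every $z\in\mathbb R^n$. Inserting this with $z=\nabla f(x)$ gives $\lambda^p\,\mathcal L^{2n}(E_{\lambda,K})\le 2C(n)\int_{\mathbb R^n}\|\nabla f(x)\|_{Z_p^*K}^p\,dx$, and taking the supremum over $\lambda>0$ yields \eqref{eqrig}.

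The argument is short once Proposition~\ref{BSYProp2.2} is at hand, and I expect the only point needing care to be the reduction in the first paragraph: one should check that for $\mathcal L^{2n}$-almost every pair the line $L$ and its parametrization are well defined, that the operator $F$ is admissible (each $F_L\in C_c(\mathbb R)$, with the triple integral measurable in $\omega$), and that the absolute values are consistently handled when $s_y>s_x$. All of this is routine, since $\|\cdot\|_K$ is comparable to the Euclidean norm and $f\in C_c^\infty(\mathbb R^n)$; the genuine analytic content, converting the nonlocal two-point quantity into a one-dimensional integral of $|\partial_\omega f|^p$, is entirely supplied by Proposition~\ref{BSYProp2.2}.
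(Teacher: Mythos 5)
Your proposal is correct and follows essentially the same route as the paper: the same choice of operator $F_L(\hat x+s\omega)=\lambda^{-p}\|\omega\|_K^{-(n+p)}|\nabla f(\hat x+s\omega)\cdot\omega|^p$, the same H\"older argument giving $E_{\lambda,K}\subseteq E(F)$, the application of Proposition~\ref{BSYProp2.2}, and the same Fubini/polar-coordinate identification $\int_{S^{n-1}}\|\omega\|_K^{-(n+p)}|z\cdot\omega|^p\,d\omega=2\|z\|_{Z_p^*K}^p$ before taking the supremum over $\lambda$. No substantive differences or gaps.
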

\begin{proof} On one hand, for any $L\in \text{Aff}(n,1)$, we can write
$L=\hat{x}+L_{\omega}$ for some $\omega\in S^{n-1}$ and $\hat{x}\in
\omega^{\perp}$. Define 
\begin{equation*}
F_{L}(\hat{x}+s\omega):=\frac{\left|\nabla f(\hat{x}+s\omega)\cdot
\omega\right|^p}{\lambda^p||\omega||_K^{n+p}},\qquad\qquad s\in
\mathbb R.
\end{equation*}
Applying Proposition \ref{BSYProp2.2} to the above $F_L$, we see
that
\begin{equation*}E(F)=\left\{(x,y)\in \mathbb R^n\times\mathbb R^n:\ x\neq
y,\left|\int_{s_y}^{s_x}\frac{\left|\nabla f(\hat{x}+s\omega)\cdot
\omega\right|^p}{\lambda^p||\omega||_K^{n+p}}ds\right|\geq|x-y|^{n+1}\right\}.
\end{equation*}
Together with Fubini's theorem, the polar coordinate and
\eqref{moment}, we obtain
\begin{align}\label{r1}
\mathcal L^{2n}(E(F))&\leq
C\int_{S^{n-1}}\int_{\omega^{\perp}}\int_{\mathbb
R}\frac{\left|\nabla f(\hat{x}+s\omega)\cdot
\omega\right|^p}{\lambda^p||\omega||_K^{n+p}}d\mathcal H^{1}(s)d\mathcal H^{n-1}(\hat{x})d\mathcal H^{n-1}(\omega)\notag\\
&=\frac{C}{\lambda^p}\int_{S^{n-1}}\int_{\mathbb R^n}\left|\nabla
f(x)\cdot
\omega\right|^p||\omega||_K^{-n-p}dxd\mathcal H^{n-1}(\omega)\notag\\
&=\frac{C}{\lambda^p}\int_{\mathbb
R^n}\left(\int_{S^{n-1}}\left|\nabla f(x)\cdot
\omega\right|^p||\omega||_K^{-n-p}d\mathcal H^{n-1}(\omega)\right)dx\notag\\
&=\frac{2C}{\lambda^p}\int_{\mathbb R^n}||\nabla f(x)
||_{Z_p^*K}^pdx.
\end{align}

On the other hand, let $L=\hat{x}+L_{\omega}$ be the line passing
through $x,y\in \mathbb R^n$. Let $\tilde{f}(s)=f(\hat{x}+s\omega)$,
then $\tilde{f}^{\prime}(s)=\nabla f(\hat{x}+s\omega)\cdot \omega$.
From H\"{o}lder's inequality and the fact that $|x-y|=|s_x-s_y|$, we
have
\begin{align}\label{eq11}
|f(x)-f(y)|&=\left|\tilde{f}(s_x)-\tilde{f}(s_y)\right|\notag\\
&=\left|\int_{s_y}^{s_x}\tilde{f}^{\prime}(s)ds\right|\notag\\
&\leq \left|\int_{s_y}^{s_x}\left|\nabla f(\hat{x}+s\omega)\cdot \omega\right|ds\right|\notag\\
&\leq |x-y|^{\frac{p-1}{p}} \left|\int_{s_y}^{s_x}\left|\nabla
f(\hat{x}+s\omega)\cdot \omega\right|^pds\right|^{\frac1p}.
\end{align}
Note that $||x-y||_K=|x-y|\cdot ||\omega||_K$. Thus, \eqref{eq11}
implies that
\begin{align}
&\left\{(x,y)\in\mathbb R^n\times\mathbb R^n:\ x\neq y,\frac{|f(x)-f(y)|}{||x-y||_K^{\frac{n}{p}+1}}\geq\lambda\right\}\subseteq \notag\\
&\left\{ (x,y)\in\mathbb R^n\times\mathbb R^n:\ x\neq
y,\left|\int_{s_y}^{s_x}\frac{\left|\nabla f(\hat{x}+s\omega)\cdot
\omega\right|^p}{\lambda^p||\omega||_K^{n+p}}ds\right|\geq
|x-y|^{n+1}\right\}.\label{eq12}
\end{align}
Combining \eqref{r1} and \eqref{eq12}, we get
\begin{equation}\label{eq20}
\lambda^p\mathcal L^{2n}\left(\left\{(x,y)\in\mathbb
R^n\times\mathbb R^n:\ x\neq
y,\frac{|f(x)-f(y)|}{||x-y||_K^{\frac{n}{p}+1}}\geq\lambda\right\}\right)\leq
2C\int_{\mathbb R^N}||\nabla f(x)||_{Z_p^*K}^pdx.
\end{equation}
Hence by \eqref{Mar}, the desired inequality \eqref{eqrig} follows by taking the supremum of \eqref{eq20} for all
$\lambda>0$.
\end{proof}

\begin{lemma}\label{L2}
For $f\in C_c^\infty(\mathbb R^n)$, if we denote
\begin{equation}\label{eq6''}
E_{\lambda,K}=\left\{(x,y)\in\mathbb R^n\times\mathbb R^n:\ x\neq
y,\frac{|f(x)-f(y)|}{||x-y||_K^{\frac{n}{p}+1}}\geq\lambda\right\},
\end{equation}
then
\begin{equation}\label{eq7''}
\lim_{\lambda\rightarrow\infty}\lambda^p\mathcal
L^{2n}(E_{\lambda,K})=\frac{2}{n}\int_{\mathbb R^n}||\nabla
f(x)||_{Z_p^*K}^pdx.
\end{equation}

\end{lemma}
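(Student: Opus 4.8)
The plan is to follow the linearization scheme of Brezis-Van Schaftingen-Yung \cite{BSY} behind \eqref{eq7}, adapted to the Minkowski functional $\|\cdot\|_K$. Throughout, write $z=y-x$, $r=|z|$, $\omega=z/|z|\in S^{n-1}$, so that $\|x-y\|_K=r\,\|\omega\|_K$, and put $c_K:=\min_{\theta\in S^{n-1}}\|\theta\|_K>0$. \emph{First}, I would localize near the diagonal: since $f$ has compact support, $\|f\|_{L^\infty}<\infty$, and if $(x,y)\in E_{\lambda,K}$ with $|x-y|>\delta$ then $\lambda(c_K\delta)^{n/p+1}\le\lambda\|x-y\|_K^{n/p+1}\le|f(x)-f(y)|\le2\|f\|_{L^\infty}$, which is impossible once $\lambda\ge2\|f\|_{L^\infty}(c_K\delta)^{-(n/p+1)}$. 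Hence for all large $\lambda$ one has $E_{\lambda,K}=E_{\lambda,K}\cap\{|x-y|\le\delta\}$; moreover \eqref{eq20} from Lemma \ref{L1} already guarantees $\lambda^p\mathcal L^{2n}(E_{\lambda,K})<\infty$, so the limit in \eqref{eq7''} is a statement about a finite quantity. Since $(x,y)\in E_{\lambda,K}$ forces $f(x)\ne f(y)$, the $y$-slice of $E_{\lambda,K}\cap\{|x-y|\le\delta\}$ is empty unless $x\in U_\delta:=\{x:\operatorname{dist}(x,\operatorname{supp}f)\le\delta\}$, a bounded set.

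\emph{Next}, I would linearize. Let $\eta(\delta):=\sup\{|\nabla f(a)-\nabla f(b)|:|a-b|\le\delta\}$, so $\eta(\delta)\to0$ as $\delta\to0^+$ by uniform continuity of $\nabla f$. Writing $f(y)-f(x)=\int_0^1\nabla f(x+tz)\cdot z\,dt$ and subtracting $\nabla f(x)\cdot z=\int_0^1\nabla f(x)\cdot z\,dt$ gives, for $|z|\le\delta$,
\[
\bigl|\,|f(x)-f(y)|-|\nabla f(x)\cdot z|\,\bigr|\le\eta(\delta)\,r .
\]
Hence, with $v:=\nabla f(x)$, the set $E_{\lambda,K}\cap\{r\le\delta\}$ is squeezed between $\{r\le\delta,\ r(|v\cdot\omega|-\eta(\delta))_+\ge\lambda r^{n/p+1}\|\omega\|_K^{n/p+1}\}$ and $\{r\le\delta,\ r(|v\cdot\omega|+\eta(\delta))\ge\lambda r^{n/p+1}\|\omega\|_K^{n/p+1}\}$. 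Using Fubini in $x$ and polar coordinates $dz=r^{n-1}\,dr\,d\omega$, and noting that the corresponding threshold radius $\bigl((|v\cdot\omega|\pm\eta(\delta))/(\lambda\|\omega\|_K^{n/p+1})\bigr)^{p/n}$ drops below $\delta$ once $\lambda$ is large, uniformly for $x\in U_\delta$ and $\omega\in S^{n-1}$ (because $|v|\le\|\nabla f\|_{L^\infty}$ and $\|\omega\|_K\ge c_K$), the inner $r$-integral equals $\tfrac1n$ times the $n$-th power of that radius. This yields
\[
\frac1n\!\int_{U_\delta}\!\!\int_{S^{n-1}}\!\frac{(|\nabla f(x)\cdot\omega|-\eta(\delta))_+^{p}}{\|\omega\|_K^{n+p}}\,d\omega\,dx
\;\le\;\liminf_{\lambda\to\infty}\lambda^p\mathcal L^{2n}(E_{\lambda,K})
\;\le\;\limsup_{\lambda\to\infty}\lambda^p\mathcal L^{2n}(E_{\lambda,K})
\;\le\;\frac1n\!\int_{U_\delta}\!\!\int_{S^{n-1}}\!\frac{(|\nabla f(x)\cdot\omega|+\eta(\delta))^{p}}{\|\omega\|_K^{n+p}}\,d\omega\,dx .
\]

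\emph{Then}, I would let $\delta\to0^+$: the integrands are dominated by $(\|\nabla f\|_{L^\infty}+1)^p c_K^{-(n+p)}\mathbf 1_{U_1}(x)$ and $U_\delta$ decreases, so dominated convergence together with the continuity of $s\mapsto s^p$ forces both bounds to the common value $\tfrac1n\int_{\mathbb R^n}\int_{S^{n-1}}|\nabla f(x)\cdot\omega|^p\|\omega\|_K^{-(n+p)}\,d\omega\,dx$. Finally, to identify this with the right-hand side of \eqref{eq7''}, I would compute, for fixed $v\in\mathbb R^n$, via polar coordinates and the radial function $\rho_K(\omega)=1/\|\omega\|_K$,
\[
\int_K|v\cdot y|^p\,dy=\int_{S^{n-1}}|v\cdot\omega|^p\!\!\int_0^{1/\|\omega\|_K}\!\!\!r^{n+p-1}\,dr\,d\omega=\frac1{n+p}\int_{S^{n-1}}\frac{|v\cdot\omega|^p}{\|\omega\|_K^{n+p}}\,d\omega ,
\]
so by \eqref{moment}, $\int_{S^{n-1}}|v\cdot\omega|^p\|\omega\|_K^{-(n+p)}\,d\omega=2\|v\|_{Z_p^*K}^p$; taking $v=\nabla f(x)$ and integrating in $x$ gives exactly $\tfrac2n\int_{\mathbb R^n}\|\nabla f(x)\|_{Z_p^*K}^p\,dx$, which is \eqref{eq7''}.

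\emph{The main obstacle} is the linearization step: for most directions $\omega$ the Taylor error $\eta(\delta)|z|$ is comparable to, or larger than, the linear term $|\nabla f(x)\cdot\omega|$, so it cannot be absorbed as a multiplicative $(1\pm\varepsilon)$ factor; one is forced to keep it additive inside $(\,\cdot\,)_+^p$ and compare the resulting $L^p$-averages over $S^{n-1}$. This is legitimate precisely because the global bound $|\nabla f|\le\|\nabla f\|_{L^\infty}$ and the compact support of $f$ turn the $\delta\to0$ passage into a routine dominated-convergence argument. A secondary technical point is ensuring, uniformly in $x$ and $\omega$, that the sublevel radius stays below the cutoff $\delta$ before that cutoff is discarded, which is where $\|\omega\|_K\ge c_K>0$ and the boundedness of $\nabla f$ and of its support are used.
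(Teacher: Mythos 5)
Your proposal is correct and follows essentially the same route as the paper: linearize $f(y)-f(x)$ by $\nabla f(x)\cdot(y-x)$, compute the radial threshold $r\sim\bigl(|\nabla f(x)\cdot\omega|/(\lambda\|\omega\|_K^{n/p+1})\bigr)^{p/n}$ in polar coordinates, pass to the limit by monotone/dominated convergence, and identify $\int_{S^{n-1}}|v\cdot\omega|^p\|\omega\|_K^{-(n+p)}\,d\omega=2\|v\|_{Z_p^*K}^p$ via \eqref{moment}. The only differences are cosmetic: you control the Taylor error by the modulus of continuity $\eta(\delta)$ of $\nabla f$ and a near-diagonal localization $|x-y|\le\delta$, where the paper uses the Hessian bound $b|x-y|^2$ together with its Claims 1 and 2 (the intervals $I_{\lambda,\delta}$, $J_\lambda$ and the bound $\operatorname{dist}(x,\operatorname{supp}f)\le1$), so both arguments are valid.
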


\begin{proof}
For $f\in C_c^\infty(\mathbb R^n)$, denote by $a:=||\nabla
f||_{L^\infty(\mathbb R^n)}$ and $b:=||\nabla^2
f||_{L^\infty(\mathbb R^n)}$. Obviously,
\begin{equation}\label{eq21}
|f(x)-f(y)|\leq a|x-y|,\qquad \text{for~any}~  x,y\in\mathbb R^n,
\end{equation}
and
\begin{equation}\label{eq22}
|f(x)-f(y)-\nabla f(x)\cdot(x-y)|\leq b|x-y|^2, \qquad
\text{for~any}~ x,y\in\mathbb R^n.
\end{equation}

For any $x\in \mathbb R^n$, $\omega\in S^{n-1}$ such that
$\nabla f(x)\cdot\omega\neq 0$ and small $\delta\in (0,1)$, define
the interval
\begin{equation*}
I_{\lambda,\delta}(x,\omega)=\left\{y=x+s\omega:0<s\leq r,\
r^n=\min\left\{\frac{\delta^n}{b^n}|\nabla
f(x)\cdot\omega|^n,\frac{(1-\delta)^p|\nabla
f(x)\cdot\omega|^p}{\lambda^p||\omega||_K^{n+p}}\right\}\right\}.
\end{equation*}

\medskip

\noindent\textbf{Claim 1.} for any $x\in\mathbb R^n$ and $y\in
I_{\lambda,\delta}(x,\omega)$ such that $\nabla f(x)\cdot\omega\neq
0$, it holds that $(x,y)\in E_{\lambda,K}$.

\medskip

 \noindent Indeed, from the definition of $I_{\lambda,\delta}(x,\omega)$, we have
\begin{equation*}
br\leq \delta|\nabla f(x)\cdot\omega|\qquad\text{and}\qquad \lambda
||\omega||_K^{\frac np +1}r^{\frac np}\leq (1-\delta)|\nabla
f(x)\cdot\omega|,
\end{equation*}
and hence
\begin{equation*}
br+\lambda ||\omega||_K^{\frac np +1}r^{\frac np}\leq |\nabla
f(x)\cdot\omega|.
\end{equation*}
This together with \eqref{eq22} and the fact that $s=|x-y|\leq r$, we obtain
\begin{equation*}
|f(x)-f(y)|\geq |\nabla f(x)\cdot(x-y)|- b|x-y|^2\geq\lambda
|x-y|^{\frac np +1}||\omega||_K^{\frac np +1}=\lambda
||x-y||_K^{\frac np +1}.
\end{equation*}
Hence it follows from \eqref{eq6''} that $(x,y)\in E_{\lambda,K}$,
which proves Claim 1.

\medskip

By using Claim 1 and the polar coordinate, we obtain
\begin{align*}
\lambda^p\mathcal
L^{2n}\left(E_{\lambda,K}\right)&\geq\frac{\lambda^p}{n}\int_{\mathbb R^n}\int_{ S^{n-1}}\textbf{1}_{\nabla f(x)\cdot\omega\neq0}\cdot r^nd\omega dx\notag\\
&=\frac1n\int_{\mathbb R^n}\int_{ S^{n-1}}\textbf{1}_{\nabla
f(x)\cdot\omega\neq0}\cdot\min\left\{\frac{\lambda^p\delta^n}{b^n}|\nabla
f(x)\cdot\omega|^n,\frac{(1-\delta)^p|\nabla
f(x)\cdot\omega|^p}{||\omega||_K^{n+p}}\right\}d\omega dx.
\end{align*}
By the monotone convergence theorem, we further get
\begin{equation*}
\liminf_{\lambda\rightarrow\infty}\lambda^p\mathcal
L^{2n}\left(E_{\lambda,K}\right)\geq\frac{(1-\delta)^p}{n}\int_{\mathbb
R^n}\int_{ S^{n-1}}|\nabla
f(x)\cdot\omega|^p||\omega||_K^{-n-p}d\omega dx.
\end{equation*}
Since $\delta>0$ is arbitrary small, it follows from the polar
coordinate and \eqref{moment} that
\begin{align}
\liminf_{\lambda\rightarrow\infty}\lambda^p\mathcal
L^{2n}\left(E_{\lambda,K}\right)\geq&\frac{1}{n}\int_{\mathbb
R^n}\left(\int_{S^{n-1}}\left|\nabla f(x)\cdot
\omega\right|^p||\omega||_K^{-n-p}d\omega\right)dx\notag\\
=&\frac{2}{n}\int_{\mathbb R^n}||\nabla f(x)||_{Z_p^*K}^pdx.
\label{eq30}
\end{align}

\medskip

In view of \eqref{eq30}, to show \eqref{eq7''}, it suffices to prove that
\begin{equation*}
\limsup_{\lambda\rightarrow\infty}\lambda^p\mathcal
L^{2n}\left(E_{\lambda,K}\right)\leq\frac{2}{n}\int_{\mathbb R^n}||\nabla f(x)||_{Z_p^*K}^pdx.
\end{equation*}

For any $x\in \mathbb R^n$ and $\omega\in S^{n-1}$, define the
interval
\begin{equation*}
J_{\lambda}(x,\omega)=\left\{y=x+s\omega:0<s\leq R,
R^n=\frac{1}{\lambda^p ||\omega||_K^{n+p}}\left(|\nabla
f(x)\cdot\omega|+b\left(\frac{a}{\lambda ||\omega||_K^{\frac np
+1}}\right)^{\frac pn}\right)^p\right\}.
\end{equation*}

\bigskip

\noindent\textbf{Claim 2.}  if $(x,y)\in E_{\lambda,K}$ with
$\lambda>a ||\omega||_K^{-\frac np -1}$ and $\omega=(y-x)/|y-x|$,
then $y\in J_{\lambda}(x,\omega)$ and $\text{dist}(x,\text{supp}\
f)\leq 1$. Here $\text{dist}(x,\text{supp}\ f)$ is the Euclidean
distance between $x$ and the support of $f$.

\medskip

\noindent Indeed, it follows from \eqref{eq6''} that $(x,y)\in
E_{\lambda,K}$ implies
\begin{equation}\label{E}
|f(x)-f(y)|\geq \lambda ||x-y||_K^{\frac np +1}.
\end{equation}
From \eqref{eq22}, we get
\begin{equation}\label{E'}
|f(x)-f(y)|\leq |\nabla f(x)\cdot(x-y)|+b|x-y|^2.
\end{equation}
Hence by \eqref{E} and \eqref{E'},
\begin{equation}\label{eq32}
\lambda ||\omega||_K^{\frac np +1}s^{\frac np}\leq  |\nabla
f(x)\cdot\omega|+bs,
\end{equation}
where $s=|x-y|$ and $\omega=(y-x)/|y-x|$. Using \eqref{eq21} and
\eqref{E}, we further have
\begin{equation}\label{eq33}
\lambda ||\omega||_K^{\frac np +1}s^{\frac np}\leq  a.
\end{equation}
Substituting \eqref{eq33} into \eqref{eq32}, we obtain
\begin{equation*}
\lambda ||\omega||_K^{\frac np +1}s^{\frac np}\leq  |\nabla
f(x)\cdot\omega|+b\left(\frac{a}{\lambda ||\omega||_K^{\frac np
+1}}\right)^{\frac pn},
\end{equation*}
which implies $y\in J_{\lambda}(x,\omega)$.

It remains to
show that if $(x,y)\in E_{\lambda,K}$ with $\lambda>a
||\omega||_K^{-\frac np -1}$, then
\begin{equation}\label{eq35}
\text{dist}(x,\text{supp}\ f)\leq 1.
\end{equation}
Indeed, if $(x,y)\in E_{\lambda,K}$, then it follows from
\eqref{eq33} and the assumption $\lambda>a ||\omega||_K^{-\frac np
-1}$ that $s=|x-y|<1$. On the contrary, suppose
$\text{dist}(x,\text{supp}\ f)>1$. Then, from the above observation,
we must have $f(x)=f(y)=0$. Together with \eqref{E}, we get
$\lambda||\omega||_K^{\frac np +1}|x-y|^{\frac np
+1}\leq|f(x)-f(y)|=0$, which further implies that $x=y$. However, it
contradicts the fact that $(x,x)\notin E_{\lambda,K}$ for any $x\in
\mathbb R^n$, and hence \eqref{eq35} follows. This completes the
proof of Claim 2.

By Claim 2 and the polar coordinate, we obtain
\begin{align*}
\lambda^p\mathcal
L^{2n}\left(E_{\lambda,K}\right)&\leq\frac{\lambda^p}{n}\int_{\mathbb
R^n}\int_{S^{n-1}}\textbf{1}_{\text{dist}(x,\text{supp}\
f)\leq1}\cdot R^nd\omega dx\notag\\
&=\frac1n\int_{\mathbb R^n}\int_{
S^{n-1}}\textbf{1}_{\text{dist}(x,\text{supp}\
f)\leq1}\cdot\frac{1}{||\omega||_K^{n+p}}\left(|\nabla
f(x)\cdot\omega|+b\left(\frac{a}{\lambda ||\omega||_K^{\frac np
+1}}\right)^{\frac pn}\right)^pd\omega dx.
\end{align*}
By the dominated convergence theorem and \eqref{moment}, we further
get
\begin{equation*}
\limsup_{\lambda\rightarrow\infty}\lambda^p\mathcal
L^{2n}\left(E_{\lambda,K}\right)\leq\frac{1}{n}\int_{\mathbb
R^n}\left(\int_{S^{n-1}}\left|\nabla f(x)\cdot
\omega\right|^p||\omega||_K^{-n-p}d\omega\right)dx=\frac{2}{n}\int_{\mathbb
R^n}||\nabla f(x)||_{Z_p^*K}^pdx,
\end{equation*}
together with \eqref{eq30}, the desired result \eqref{eq7''}
follows.
\end{proof}

\subsection*{Proof of Theorem \ref{mainthm}(b)}
Inspired by the technique developed in \cite{GY}, the proof of
Theorem \ref{mainthm}(b) can be divided by the following two lemmas.
The second inequality of \eqref{eqeq5'} follows from Lemma \ref{L3}.
The limiting behavior \eqref{eqeq7'} will be established in Lemma
\ref{L4}.

\begin{lemma}\label{L3}For $f\in L^p(\mathbb R^n)$,
\begin{equation}\label{eqeqeq1}
\left[\frac{f(x)-f(y)}{||x-y||_K^{\frac np}}\right]^p_{M^p(\mathbb
R^n\times\mathbb R^n,\mathcal L^{2n})} \leq 2^{p+1}|K|||f||_{L^p(\mathbb R^n)}^p.
\end{equation}
\end{lemma}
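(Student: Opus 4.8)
The plan is to unwind the definition \eqref{Mar} of the Marcinkiewicz quasinorm: it suffices to prove the estimate
\[
\lambda^p\mathcal L^{2n}(\widetilde E_{\lambda,K})\le 2^{p+1}|K|\,||f||_{L^p(\mathbb R^n)}^p\qquad\text{for every }\lambda>0,
\]
where $\widetilde E_{\lambda,K}=\{(x,y)\in\mathbb R^n\times\mathbb R^n:x\ne y,\ |f(x)-f(y)|\ge\lambda||x-y||_K^{n/p}\}$, and then to take the supremum over $\lambda>0$. (If $||f||_{L^p(\mathbb R^n)}=\infty$ there is nothing to prove.)

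The first step is an elementary dichotomy isolating the ``large value''. If $(x,y)\in\widetilde E_{\lambda,K}$ then $\lambda||x-y||_K^{n/p}\le|f(x)-f(y)|\le|f(x)|+|f(y)|$, so at least one of $|f(x)|\ge\frac{\lambda}{2}||x-y||_K^{n/p}$ or $|f(y)|\ge\frac{\lambda}{2}||x-y||_K^{n/p}$ must hold. Hence $\widetilde E_{\lambda,K}\subseteq A_\lambda\cup B_\lambda$, where $A_\lambda=\{(x,y):|f(x)|\ge\frac{\lambda}{2}||x-y||_K^{n/p}\}$ and $B_\lambda$ is obtained from $A_\lambda$ by interchanging $x$ and $y$. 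Since $(x,y)\mapsto(y,x)$ preserves $\mathcal L^{2n}$, we have $\mathcal L^{2n}(B_\lambda)=\mathcal L^{2n}(A_\lambda)$, and therefore $\mathcal L^{2n}(\widetilde E_{\lambda,K})\le 2\,\mathcal L^{2n}(A_\lambda)$.

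The second step is to compute $\mathcal L^{2n}(A_\lambda)$ by Fubini's theorem, integrating first in $y$ for fixed $x$. The condition $|f(x)|\ge\frac{\lambda}{2}||x-y||_K^{n/p}$ is equivalent to $||x-y||_K\le\bigl(2|f(x)|/\lambda\bigr)^{p/n}$, and by the definition \eqref{Min} of the Minkowski functional together with the origin symmetry of $K$, the set $\{y:||x-y||_K\le\rho\}$ is the translate $x+\rho K$, of Lebesgue measure $\rho^n|K|$. Thus the $x$-section of $A_\lambda$ has measure $\bigl(2|f(x)|/\lambda\bigr)^p|K|$, and integrating in $x$ gives $\mathcal L^{2n}(A_\lambda)=2^p\lambda^{-p}|K|\,||f||_{L^p(\mathbb R^n)}^p$. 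Combined with the first step this yields $\lambda^p\mathcal L^{2n}(\widetilde E_{\lambda,K})\le 2^{p+1}|K|\,||f||_{L^p(\mathbb R^n)}^p$, which is \eqref{eqeqeq1}.

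I do not expect a genuine obstacle in this direction: it is essentially a measure computation, and the only points requiring care are the triangle-inequality dichotomy and the homogeneity identity $\mathcal L^{n}(x+\rho K)=\rho^n|K|$ coming from \eqref{Min}. (The substantive half of \eqref{eqeq5'}, namely the lower bound $2|K|\,||f||_{L^p(\mathbb R^n)}^p\le[\,\cdot\,]^p_{M^p}$, together with the precise limit \eqref{eqeq7'}, is the business of the following Lemma \ref{L4}, not of this lemma.)
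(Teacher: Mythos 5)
Your proposal is correct and follows essentially the same route as the paper: the triangle-inequality dichotomy $\widetilde E_{\lambda,K}\subseteq\{|f(x)|\ge\frac{\lambda}{2}||x-y||_K^{n/p}\}\cup\{|f(y)|\ge\frac{\lambda}{2}||x-y||_K^{n/p}\}$, symmetry of the two pieces, and the computation $\mathcal L^{n}\{y:||x-y||_K\le\rho\}=\rho^n|K|$ via the Minkowski functional, then integration in $x$ and the supremum over $\lambda$. No gaps.
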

\begin{proof}
Let $f\in L^p(\mathbb R^n)$ and $\lambda>0$, denote
\begin{equation}\label{EE}
\widetilde E_{\lambda,K}=\left\{(x,y)\in\mathbb R^n\times\mathbb
R^n:\ x\neq
y,\frac{|f(x)-f(y)|}{||x-y||_K^{\frac{n}{p}}}\geq\lambda\right\}.
\end{equation}
Clearly,
\begin{equation*}
\widetilde E_{\lambda,K}\subseteq\left\{(x,y)\in\mathbb
R^n\times\mathbb R^n: \frac{|f(x)|}{||x-y||_K^{\frac
np}}\geq\frac{\lambda}2\right\}\bigcup\left\{(x,y)\in\mathbb
R^n\times\mathbb R^n: \frac{|f(y)|}{||x-y||_K^{\frac
np}}\geq\frac{\lambda}2\right\},
\end{equation*}
and by symmetry, the two sets on the RHS have the same $2n$-Lebesgue
measure. On the other hand, the change of variable $z=y-x$ and the
definition of the Minkowski functional \eqref{Min}, we have, for any
given $x\in \mathbb R^n$,
\begin{equation}\label{com}
\int_{\mathbb R^n}\mathbf{1}_{\left\{y:\
||y-x||_K\leq\left(\frac{2|f(x)|}{\lambda}\right)^{\frac
pn}\right\}}dy =\int_{\mathbb R^n}\mathbf{1}_{\left\{z+x:\
||z||_{\left(\frac{2|f(x)|}{\lambda}\right)^{\frac pn}K}\leq
1\right\}}dz=\left(\frac{2|f(x)|}{\lambda}\right)^{p}|K|.
\end{equation}
Therefore,
\begin{align}
\mathcal L^{2n}(\widetilde E_{\lambda,K})&\leq 2\int_{\mathbb R^n}\int_{\mathbb R^n}\mathbf{1}_{\left\{y:\ ||y-x||_K\leq\left(\frac{2|f(x)|}{\lambda}\right)^{\frac pn}\right\}}dydx\notag\\
&=2\int_{\mathbb R^n}\left(\frac{2|f(x)|}{\lambda}\right)^{p}|K|dx\notag\\
&=\frac{2^{p+1}|K|}{\lambda^p}||f||_{L^p(\mathbb
R^n)}^p.\label{eqeqeq2}
\end{align}
\noindent Multiplying by $\lambda^p$ on both sides of
\eqref{eqeqeq2} and taking supremum on $\lambda$, the desired
inequality \eqref{eqeqeq1} follows from the definition of the
weak $L^p$ quasinorm \eqref{Mar}.
\end{proof}

\begin{lemma}\label{L4}
For $f\in L^p(\mathbb R^n)$, if we denote
\begin{equation*}\label{eqeqeq3}
\widetilde E_{\lambda,K}=\left\{(x,y)\in\mathbb R^n\times\mathbb R^n:\ x\neq
y,\frac{|f(x)-f(y)|}{||x-y||_K^{\frac{n}{p}}}\geq\lambda\right\},
\end{equation*}
then
\begin{equation}\label{eqeqeq4}
\lim_{\lambda\rightarrow0^+}\lambda^p\mathcal
L^{2n}(\widetilde E_{\lambda,K})=2|K|||f||_{L^p}^p.
\end{equation}
\end{lemma}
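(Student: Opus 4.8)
The plan is to prove \eqref{eqeqeq4} by establishing matching $\liminf$ and $\limsup$ bounds as $\lambda\to0^+$. The upper bound $\limsup_{\lambda\to0^+}\lambda^p\mathcal L^{2n}(\widetilde E_{\lambda,K})\le 2|K|\,\|f\|_{L^p}^p$ is already contained in the proof of Lemma \ref{L3}: inequality \eqref{eqeqeq2} gives $\lambda^p\mathcal L^{2n}(\widetilde E_{\lambda,K})\le 2^{p+1}|K|\,\|f\|_{L^p}^p$ uniformly, but to get the sharp constant one should refine the splitting. First I would reduce to $f\in C_c^\infty(\mathbb R^n)$ (or $C_c(\mathbb R^n)$) by density: using \eqref{eqeqeq2} applied to $f-g$ together with the quasi-triangle inequality for the weak-$L^p$ quasinorm $[\cdot]_{M^p}$, one controls the error $\big|[\tfrac{(f-g)(x)-(f-g)(y)}{\|x-y\|_K^{n/p}}]_{M^p}\big|$ by $C\|f-g\|_{L^p}$, so it is enough to prove \eqref{eqeqeq4} on a dense class; one must be slightly careful since $M^p$ is only a quasinorm, but the standard $\epsilon/3$-type argument with the $p$-th powers works.

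For the $\limsup$ bound on nice $f$, write $\widetilde E_{\lambda,K}\subseteq A_\lambda\cup B_\lambda$ where $A_\lambda=\{|f(x)|\ge\tfrac{\lambda}{2}\|x-y\|_K^{n/p}\}$ and $B_\lambda$ is the symmetric set with $f(y)$; the key point (different from the crude Lemma \ref{L3} estimate) is that for fixed $x$, the ``bad'' $y$ with $\|x-y\|_K$ small and $x$ near $\mathrm{supp}\,f$ contribute, and as $\lambda\to0$ the radius $\big(\tfrac{2|f(x)|}{\lambda}\big)^{p/n}$ blows up, but on the overlap $A_\lambda\cap B_\lambda$ both $|f(x)|$ and $|f(y)|$ are large while $\|x-y\|_K$ is controlled, and for $f$ with compact support this overlap has negligible measure relative to $\lambda^{-p}$ as $\lambda\to0$ (essentially because $\mathcal L^{2n}(A_\lambda\cap B_\lambda)=o(\lambda^{-p})$: on that set $|f(x)-f(y)|$ is small compared with $|f(x)|+|f(y)|$ only on a set where $y$ is forced close to $x$, and one uses uniform continuity of $f$). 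Hence $\mathcal L^{2n}(\widetilde E_{\lambda,K})=\mathcal L^{2n}(A_\lambda)+\mathcal L^{2n}(B_\lambda)-\mathcal L^{2n}(A_\lambda\cap B_\lambda)$ and by \eqref{com} each of $\lambda^p\mathcal L^{2n}(A_\lambda)$, $\lambda^p\mathcal L^{2n}(B_\lambda)$ equals exactly $2^p|K|\int_{\mathbb R^n}|f(x)|^p\,dx$... which is too big. The correct bookkeeping must instead use the \emph{sharp} splitting: one shows $\widetilde E_{\lambda,K}$ is, up to an $o(\lambda^{-p})$ set, equal to $\{(x,y): |f(x)|>|f(y)|,\ \|x-y\|_K\le(\tfrac{|f(x)|-|f(y)|}{\lambda}+o)^{p/n}\}$ plus its reflection, and by dominated convergence $\lambda^p$ times its measure tends to $2|K|\int(|f(x)|-|f(y)|)^p$ integrated appropriately — but since $y$ ranges over a shrinking-in-relative-terms neighborhood this collapses to $2|K|\int|f(x)|^p\,dx$. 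The cleanest route, following \cite{GY}, is: for the $\liminf$, exhibit an explicit subset $\{(x,y): |f(x)|\ge\lambda\|x-y\|_K^{n/p}+ |f(y)|\}\cap\{|f(y)|\le\epsilon\}$-type family whose $\lambda^p$-measure converges to $2(1-o_\epsilon(1))|K|\|f\|_{L^p}^p$ by \eqref{com} and Lebesgue differentiation / continuity of translation; for the $\limsup$, bound $\widetilde E_{\lambda,K}$ by the union over the two variables but subtract the genuine double-counting using that $\{(x,y):|f(x)|\ge\lambda\|x-y\|_K^{n/p},\ |f(y)|\ge\lambda\|x-y\|_K^{n/p}\}$ has $\lambda^p$-measure $\to0$ (this needs $f\in L^p\cap L^\infty$ with compact support first, then density).

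The main obstacle is precisely this \textbf{cancellation of the factor $2^p$ down to $2$}: the naive union bound loses it, so one must show that the event ``both $|f(x)|$ and $|f(y)|$ are comparably large'' occupies asymptotically negligible $\lambda^p$-weighted measure, which forces $y$ into a $K$-ball around $x$ of radius $\sim(\|f\|_\infty/\lambda)^{p/n}$, and on that ball $|f(x)-f(y)|$ is large only on a further subset controlled by the modulus of continuity of $f$ (for continuous compactly supported $f$) — quantitatively, $\lambda^p\mathcal L^{2n}(\text{overlap})\le C\lambda^p\cdot(\|f\|_\infty/\lambda)^p\cdot(\text{measure of }x\text{ where }|f(x)|\gtrsim\lambda r^{n/p})\to0$ since the latter $x$-set has vanishing measure as $\lambda\to0$ when $f$ is bounded with compact support and $|f(x)|$ must exceed a quantity tending to a fixed positive multiple of $\|f\|_\infty$. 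Once this overlap estimate is in hand, combining it with the exact identity \eqref{com} for $\mathcal L^{2n}(A_\lambda)$ and with the density reduction yields both inequalities and hence \eqref{eqeqeq4}; the first inequality in \eqref{eqeq5'} then follows by taking $\lambda\to0^+$ in the definition \eqref{Mar}.
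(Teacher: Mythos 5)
Your proposal correctly identifies the crux (getting the constant $2$ rather than the $2^{p+1}$ of Lemma \ref{L3}), and your liminf idea (the two disjoint subsets $\{|f(x)|\ge\lambda\|x-y\|_K^{n/p}+|f(y)|\}$ and its reflection, each contributing $|K|\|f\|_{L^p}^p$ via \eqref{com}) is sound. But the limsup step — the heart of the lemma — is not established. The quantitative overlap estimate you give is wrong: you bound $\lambda^p\mathcal L^{2n}(\text{overlap})$ by $C\lambda^p\,(\|f\|_\infty/\lambda)^p\cdot|\{x:|f(x)|\gtrsim\lambda r^{n/p}\}|$ with $r\sim(\|f\|_\infty/\lambda)^{p/n}$, but then $\lambda r^{n/p}\sim\|f\|_\infty$, the $x$-set is $\{|f(x)|\gtrsim\|f\|_\infty\}$, which does not shrink as $\lambda\to0^+$, and the whole bound is a constant, not $o(1)$. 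Moreover, even if the overlap were negligible, inclusion–exclusion with the sets $\{|f(x)|\ge\tfrac{\lambda}{2}\|x-y\|_K^{n/p}\}$ still yields $2\cdot 2^p|K|\|f\|_{L^p}^p$, as you yourself noted; and the version without the factor $\tfrac12$, i.e. $\widetilde E_{\lambda,K}\subseteq\{|f(x)|\ge\lambda\|x-y\|_K^{n/p}\}\cup\{|f(y)|\ge\lambda\|x-y\|_K^{n/p}\}$, is simply false pointwise (take $f(x)=-f(y)\neq0$). What rescues it — and what your writeup never isolates — is the elementary observation that every exceptional pair has $f(x)\neq0$ and $f(y)\neq0$, hence lies in $\operatorname{supp}f\times\operatorname{supp}f$, a set of measure $O(r^{2n})$ independent of $\lambda$, which is killed by the factor $\lambda^p$. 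This is exactly the mechanism the paper uses, but packaged differently: for $\operatorname{supp}f\subseteq rK$ it splits $\widetilde E_{\lambda,K}$ by whether $\|y\|_K>\|x\|_K$ or $<$, uses symmetry to get the exact identity \eqref{eq''}, and then for $\|y\|_K>r$ one has $f(y)=0$, so each $y$-section is a $K$-ball of radius $(|f(x)|/\lambda)^{p/n}$ up to an error set $rK$; this gives the two-sided sandwich \eqref{eq:Hinclusion}--\eqref{m1} and the sharp limit with no loss of constants.

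The reduction to compactly supported $f$ also cannot be done the way you first suggest: a quasi-triangle inequality for $[\cdot]_{M^p}$ controls the quasinorm of the error, but \eqref{eqeqeq4} is a statement about $\lim_{\lambda\to0^+}\lambda^p\mathcal L^{2n}$, and limits of this type do not pass through weak-type quasinorm perturbations automatically. What is needed (and what the paper does in its Case 2) is a two-sided perturbation argument: write $f=f_r+g_r$ with $f_r=f\mathbf 1_{rK}$, split the level set with thresholds $\lambda(1\mp\sigma)$, apply the compactly supported two-sided bound \eqref{m1} to $f_r$ and the crude bound \eqref{eqeqeq2} to $g_r$, and then send $\lambda\to0^+$, $r\to\infty$, $\sigma\to0^+$ in that order. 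Your closing remark ("this needs $f\in L^p\cap L^\infty$ with compact support first, then density") points in this direction, but as written the proposal neither proves the sharp limsup for compactly supported $f$ nor carries out a valid approximation step, so the proof has a genuine gap at both stages.
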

\begin{proof}
We first deal with the case that $f$ is compactly supported. Then we
prove the lemma for general case that $f\in L^p(\mathbb R^n)$ by
using suitable truncations of $f$.

\noindent\textbf{Case 1. $f$ is compactly supported.} For $\lambda >
0$, let
\begin{equation*}
H_{\lambda,K}^+ =\left\{(x,y) \in \mathbb R^n \times \mathbb R^n
\colon
||y||_K>||x||_K,\frac{|f(x)-f(y)|}{||x-y||_K^{\frac{n}{p}}}\geq\lambda\right\},
\end{equation*}
\begin{equation*}
H_{\lambda,K}^- =\left\{(x,y) \in \mathbb R^n \times \mathbb R^n
\colon
||y||_K<||x||_K,\frac{|f(x)-f(y)|}{||x-y||_K^{\frac{n}{p}}}\geq\lambda\right\},
\end{equation*}
and
\begin{equation*}
H_{\lambda,K} =\left\{(x,y) \in \mathbb R^n \times \mathbb R^n
\colon
||y||_K=||x||_K,\frac{|f(x)-f(y)|}{||x-y||_K^{\frac{n}{p}}}\geq\lambda\right\}.
\end{equation*}
Note that it follows from \eqref{Min} that $\mathcal
L^{2n}(H_{\lambda,K} )=0$. Due to the fact that $K$ is
origin-symmetric, we further have $\mathcal L^{2n}(H_{\lambda,K}^+
)=\mathcal L^{2n}(H_{\lambda,K}^-)$. Recall the definition of
$\widetilde E_{\lambda,K}$ in \eqref{EE}, clearly we have
\begin{equation}\label{eq''}
\mathcal L^{2n}(\widetilde E_{\lambda,K})=2\mathcal
L^{2n}(H_{\lambda,K}^+).
\end{equation}

Since $f$ is compactly supported, we may assume
\begin{equation*}%
\text{supp} \, f \subseteq rK
\end{equation*}%
for some $r > 0$. Observe that if $(x,y) \in H_{\lambda,K}^+$, then
we must have $x \in rK$. Otherwise, it means that $\|y\|_K>\|x\|_K>
r$. Thus, both $x,y$ are outside $rK$. Now, our assumption on
$\text{supp} \, f$ yields that $f(x)=f(y)=0$, and hence $(x,y)
\notin \widetilde E_{\lambda,K}$, contradicting the fact that $(x,y)
\in H_{\lambda,K}^+\subseteq \widetilde E_{\lambda,K}$. For any
given $x \in rK$, let
\begin{equation*}%
H_{\lambda,K,x}^+ :=\left\{y \in \mathbb R^n \colon ||y||_K>||x||_K,
\,\frac{|f(x)-f(y)|}{||x-y||_K^{\frac{n}{p}}}\geq\lambda\right\}
\end{equation*}%
and
\begin{align*}%
H_{\lambda,K,x,r}^+:&= \left\{y \in \mathbb R^n \colon ||y||_K > r,
\,
\frac{|f(x)-f(y)|}{||x-y||_K^{\frac{n}{p}}}\geq\lambda\right\}\notag\\&=\left\{y
\in \mathbb R^n \colon ||y||_K > r, \,
\frac{|f(x)|}{||x-y||_K^{\frac{n}{p}}}\geq\lambda\right\}.
\end{align*}%
Here we use the fact that for $||y||_K > r$, $f(y) = 0$. Therefore,
\begin{equation} \label{eq:Hinclusion}
H_{\lambda,K,x,r}^+ = H_{\lambda,K,x}^+ \setminus rK \subseteq
H_{\lambda,K,x}^+  \subseteq H_{\lambda,K,x,r}^+  \cup rK.
\end{equation}
Like the computation \eqref{com}, it follows from  the first
inclusion in \eqref{eq:Hinclusion} that
\begin{equation}\label{eqeqeq3}
\mathcal{L}^n(H_{\lambda,K,x}^+) \geq
\mathcal{L}^n(H_{\lambda,K,x,r}^+) \geq |K|\left(
\frac{|f(x)|^p}{\lambda^p} - r^n\right).
\end{equation}
Also, it follows from the second inclusion in \eqref{eq:Hinclusion}
that
\begin{equation}\label{eqeqeq3'}
\mathcal{L}^n(H_{\lambda,K,x}^+) \leq |K|\left(
\frac{|f(x)|^p}{\lambda^p} + r^n\right).
\end{equation}
Note that Fubini's theorem implies
\begin{equation} \label{eq:Fub}
\mathcal{L}^{2n}(H_{\lambda,K}^+) = \int_{rK}
\mathcal{L}^n(H_{\lambda,K,x}^+) dx.
\end{equation}
Now integrating \eqref{eqeqeq3} and \eqref{eqeqeq3'} over $x \in rK$,
formula \eqref{eq:Fub} yields
\begin{equation}\label{m1}%
\frac{|K|}{\lambda^p} \|f\|^p_{L^p(\mathbb R^n)} - |K|^2 r^{2n} \leq
\mathcal{L}^{2n}(H_{\lambda,K}^+) \leq \frac{|K|}{\lambda^p}
\|f\|^p_{L^p(\mathbb R^n)} + |K|^2 r^{2n}.
\end{equation}%
Multiplying both sides by $\lambda^p$ and letting $\lambda
\rightarrow 0^+$, we obtain
\begin{equation}\label{eqsemifinal}
\lim_{\lambda \rightarrow 0^+} \lambda^p
\mathcal{L}^{2n}(H_{\lambda,K}^+) = |K| \|f\|^p_{L^p(\mathbb R^n)}.
\end{equation}
Finally, it follows from \eqref{eq''} and \eqref{eqsemifinal} that
\begin{equation*}%
\lim_{\lambda\rightarrow0^+}\lambda^p\mathcal L^{2n}(\widetilde
E_{\lambda,K})= 2|K| \|f\|^p_{L^p(\mathbb R^n)},
\end{equation*}%
which completes the proof for case 1.

\bigskip

\noindent\textbf{Case 2. $f \in L^p(\mathbb R^n)$ is not necessarily
compactly supported.} Let $f_r = f\cdot \mathbf{1}_{rK}$ be the
truncation of $f$ in $rK$ for some $r>0$ and let $g_r=f-f_r$. Since $f \in
L^p(\mathbb R^n)$ for some $1 \leq p < \infty$, we must have
$\|g_r\|_{L^p(\mathbb R^n)}\rightarrow0$ as $r\rightarrow \infty$.

Since $f = f_r + g_r$, the triangle inequality yields that
\begin{equation*}
\frac{|f(x)-f(y)|}{||x-y||_K^{\frac{n}{p}}}\leq
\frac{|f_r(x)-f_r(y)|}{||x-y||_K^{\frac{n}{p}}}+\frac{|g_r(x)-g_r(y)|}{||x-y||_K^{\frac{n}{p}}}.
\end{equation*}
Hence, for any $\sigma\in(0,1)$, we have
\begin{equation*} \label{eq:Edec1}
\widetilde E_{\lambda,K} = \left\{(x,y) \in \mathbb R^n \times
\mathbb R^n \colon
\frac{|f(x)-f(y)|}{||x-y||_K^{\frac{n}{p}}}\geq\lambda\right\}
\subseteq A_f \cup A_g,
\end{equation*}
where
\begin{equation*}
A_f := \left\{(x,y) \in \mathbb R^n \times \mathbb R^n \colon
\frac{|f_r(x)-f_r(y)|}{||x-y||_K^{\frac{n}{p}}}\geq\lambda(1-\sigma)\right\}
\end{equation*}
and
\begin{equation} \label{eq:A2}
A_g := \left\{(x,y) \in \mathbb R^n \times \mathbb R^n \colon
\frac{|g_r(x)-g_r(y)|}{||x-y||_K^{\frac{n}{p}}}\geq
{\lambda\sigma}\right\}.
\end{equation}
Moreover,
\begin{equation}\label{eqeqeq17'}
\mathcal L^{2n}(\widetilde E_{\lambda,K})\leq \mathcal
L^{2n}(A_f)+\mathcal L^{2n}(A_g).
\end{equation}
Since $f_r$ is compactly supported in $rK$, replacing $\lambda$ by
$\lambda(1-\sigma)$ in the second inequality of \eqref{m1} and
\eqref{eq''}, we get
\begin{equation}\label{eqeqeq11}
\mathcal L^{2n}(A_f)\leq
\frac{2|K|}{\lambda^p(1-\sigma)^p}\|f_r\|_{L^p(\mathbb
R^n)}^p+2|K|^2 r^{2n}.
\end{equation}
For $A_g$, replacing $\lambda$ by $\lambda\sigma$ in \eqref{eqeqeq2}
for $g_r$, we have
\begin{equation}
\mathcal
L^{2n}(A_g)\leq\frac{2^{p+1}|K|}{\lambda^{p}\sigma^{p}}\|g_r\|^p_{L^p(\mathbb
R^n)}.\label{eqeqeq9}
\end{equation}
Together with \eqref{eqeqeq17'}, \eqref{eqeqeq11} and
\eqref{eqeqeq9}, multiplying by $\lambda^p$, we obtain
\begin{equation*}\label{eqeqeq15}
\lambda^p\mathcal L^{2n}(\widetilde  E_{\lambda,K})\leq
\frac{2|K|}{(1-\sigma)^p}\|f_r\|_{L^p(\mathbb
R^n)}^p+2\lambda^p|K|^2
r^{2n}+\frac{2^{p+1}|K|}{\sigma^p}\|g_r\|_{L^p(\mathbb R^n)}^p.
\end{equation*}
Now first let $\lambda\rightarrow 0^+$, then let
$r\rightarrow\infty$ and finally let $\sigma\rightarrow 0^+$, then
the facts
\begin{equation}\label{R}%
\lim_{r \rightarrow \infty} \|f_r\|_{L^p(\mathbb R^n)} =
\|f\|_{L^p(\mathbb R^n)} \quad \text{and} \quad \lim_{r \rightarrow
\infty} \|g_r\|_{L^p(\mathbb R^n)} = 0
\end{equation}%
yield that
\begin{equation}\label{eqeqeq16}
\limsup_{\lambda\rightarrow0^+}\lambda^p\mathcal L^{2n}(\widetilde
E_{\lambda,K})\leq 2|K|\|f\|_{L^p(\mathbb R^n)}^p.
\end{equation}

Similarly, the triangle inequality also yields that
\begin{equation*}
\frac{|f(x)-f(y)|}{||x-y||_K^{\frac{n}{p}}}\geq
\frac{|f_r(x)-f_r(y)|}{||x-y||_K^{\frac{n}{p}}}-\frac{|g_r(x)-g_r(y)|}{||x-y||_K^{\frac{n}{p}}}.
\end{equation*}
Hence, for any $\sigma > 0$, we have
\begin{equation*}%
\widetilde  E_{\lambda,K}=\left\{(x,y) \in \mathbb R^n \times
\mathbb R^n \colon
\frac{|f(x)-f(y)|}{||x-y||_K^{\frac{n}{p}}}\geq\lambda\right\}\supseteq
\overline{A}_f\setminus A_g
\end{equation*}%
where
\begin{equation*}
\overline{A}_f := \left\{(x,y) \in \mathbb R^n \times \mathbb R^n
\colon
\frac{|f_r(x)-f_r(y)|}{||x-y||_K^{\frac{n}{p}}}\geq\lambda(1+\sigma)\right\}
\end{equation*}
and $A_g$ is defined in \eqref{eq:A2}. Hence
\begin{equation}\label{eqeqeq17}
\mathcal L^{2n}(\widetilde  E_{\lambda,K})\geq \mathcal
L^{2n}(\overline{A}_f)-\mathcal L^{2n}(A_g).
\end{equation}
Since $f_r$ is compactly supported in $rK$, replacing $\lambda$ by
$\lambda(1+\sigma)$ in the first inequality of \eqref{m1} and
\eqref{eq''}, we get
\begin{equation}\label{eqeqeq18}
\mathcal L^{2n}(\overline{A}_f)\geq
\frac{2|K|}{\lambda^p(1+\sigma)^p}\|f_r\|_{L^p(\mathbb
R^n)}^p-2|K|^2 r^{2n}.
\end{equation}
Together with \eqref{eqeqeq9}, \eqref{eqeqeq17} and
\eqref{eqeqeq18}, multiplying by $\lambda^p$, we obtain
\begin{equation*}\label{eqeqeq19}
\lambda^p\mathcal L^{2n}(\widetilde E_{\lambda,K})\geq
\frac{2|K|}{(1+\sigma)^p}\|f_r\|_{L^p(\mathbb
R^n)}^p-2\lambda^p|K|^2
r^{2n}-\frac{2^{p+1}|K|}{\sigma^p}\|g_r\|_{L^p(\mathbb R^n)}^p.
\end{equation*}
Now first let $\lambda\rightarrow 0^+$, then let $r
\rightarrow\infty$ and finally let $\sigma \rightarrow 0^+$,
together with \eqref{R}, we obtain
\begin{equation}\label{eqeqeq20}
\liminf_{\lambda\rightarrow0^+}\lambda^p\mathcal L^{2n}(\widetilde
E_{\lambda,K})\geq 2|K|\|f\|_{L^p(\mathbb R^n)}^p.
\end{equation}
Consequently, \eqref{eqeqeq4} follows from
\eqref{eqeqeq16} and \eqref{eqeqeq20}.

\end{proof}

\bigskip

\bibliographystyle{siam}

\bigskip
\bigskip

\end{document}